\documentclass[11pt]{amsart}

\usepackage[T1]{fontenc}
\usepackage[utf8]{inputenc}
\usepackage{lmodern}
\usepackage{microtype}
\usepackage{amsmath,amssymb,amsthm,mathtools}
\usepackage{enumitem}
\usepackage{aliascnt}
\usepackage[margin=1.15in]{geometry}
\usepackage{hyperref}
\usepackage[nameinlink,capitalize]{cleveref}

\hypersetup{
  colorlinks=true,
  linkcolor=blue,
  citecolor=blue,
  urlcolor=blue,
  pdftitle={Effective Strassmann Certificates for Local p-adic Dynamical Mordell-Lang Interpolants},
  pdfauthor={Farrukh Mukhamedov},
  pdfsubject={p-adic Dynamical Mordell-Lang interpolation and Strassmann certificates},
  pdfkeywords={Dynamical Mordell-Lang, p-adic interpolation, Strassmann theorem, Mahler expansion, Tate algebra, arithmetic dynamics}
}

\title[Effective Strassmann Certificates]{Effective Strassmann Certificates for Local $p$-adic Dynamical Mordell--Lang Interpolants}
\author[F. Mukhamedov]{Farrukh Mukhamedov}
\address{Department of Mathematical Sciences, College of Science, United Arab Emirates University, P.O. Box 15551, Al Ain, Abu Dhabi, United Arab Emirates}
\email{far75m@gmail.com; farrukh.m@uaeu.ac.ae}
\subjclass[2020]{Primary 37P55; Secondary 37P20, 11S80, 30G06, 14G20}
\keywords{Dynamical Mordell--Lang; $p$-adic interpolation; Strassmann theorem; Mahler expansion; Tate algebra; arithmetic dynamics}
\date{July 7, 2026}

\newtheorem{theorem}{Theorem}[section]
\newaliascnt{proposition}{theorem}
\newtheorem{proposition}[proposition]{Proposition}
\aliascntresetthe{proposition}
\newaliascnt{lemma}{theorem}
\newtheorem{lemma}[lemma]{Lemma}
\aliascntresetthe{lemma}
\newaliascnt{corollary}{theorem}
\newtheorem{corollary}[corollary]{Corollary}
\aliascntresetthe{corollary}
\newaliascnt{definition}{theorem}
\newtheorem{definition}[definition]{Definition}
\aliascntresetthe{definition}
\newaliascnt{remark}{theorem}
\newtheorem{remark}[remark]{Remark}
\aliascntresetthe{remark}
\newaliascnt{example}{theorem}
\newtheorem{example}[example]{Example}
\aliascntresetthe{example}
\newaliascnt{fact}{theorem}
\newtheorem{fact}[fact]{Standard fact}
\aliascntresetthe{fact}

\crefname{theorem}{Theorem}{Theorems}
\Crefname{theorem}{Theorem}{Theorems}
\crefname{proposition}{Proposition}{Propositions}
\Crefname{proposition}{Proposition}{Propositions}
\crefname{lemma}{Lemma}{Lemmas}
\Crefname{lemma}{Lemma}{Lemmas}
\crefname{corollary}{Corollary}{Corollaries}
\Crefname{corollary}{Corollary}{Corollaries}
\crefname{definition}{Definition}{Definitions}
\Crefname{definition}{Definition}{Definitions}
\crefname{remark}{Remark}{Remarks}
\Crefname{remark}{Remark}{Remarks}
\crefname{example}{Example}{Examples}
\Crefname{example}{Example}{Examples}
\crefname{fact}{Standard fact}{Standard facts}
\Crefname{fact}{Standard fact}{Standard facts}

\newcommand{\Zp}{\mathbb Z_p}
\newcommand{\Qp}{\mathbb Q_p}
\newcommand{\Cp}{\mathbb C_p}
\newcommand{\OCp}{\mathcal O_{\mathbb C_p}}
\newcommand{\Np}{\mathbb Z_{\ge 0}}
\newcommand{\OK}{\mathcal O_K}
\newcommand{\vp}{v_p}
\newcommand{\ord}{\operatorname{ord}}
\newcommand{\SI}{\operatorname{SI}}
\newcommand{\A}{\mathcal A}
\newcommand{\T}{\mathcal T}
\newcommand{\id}{\mathrm{id}}
\DeclareMathOperator{\wdeg}{wdeg}

\numberwithin{equation}{section}

\begin{document}

\begin{abstract}
The $p$-adic method for Dynamical Mordell--Lang often reduces a residue class
of an orbit to the zero set of a locally analytic interpolating function.  This
paper assumes the standard interpolation, Strassmann, Mahler, and Weierstrass
tools, and studies the effective local zero-bound problem that remains after
interpolation: certifying the Strassmann index of the resulting one-variable
analytic function.  We give finite certificates for this index, including
finite-data, finite-precision, refined-tail, adaptive residue-class, one-shot,
and first-order escape criteria.  Since the Strassmann index is a rigorous
upper bound for zeros in $\Zp$ and, through Weierstrass preparation, a root
count on the closed disc over $\Cp$, these certificates give checkable stopping
criteria for local orbit-intersection computations.  A residue-class zooming
principle replaces a congruence class of times by the iterate $f^{p^h}$,
gaining $h$ additional powers of $p$ in the certificate tails.  We also
introduce an arc-ideal viewpoint for target varieties defined by several
equations, replacing a chosen hypersurface bound by the one-variable gcd of all
defining equations along the interpolated orbit.  In dimension one, the method
identifies the certified Strassmann index with the corresponding local
Weierstrass root count in the orbit ball.  Applications include certified
bounds for intersections of non-fixed power-map orbits with finite target sets,
and root-of-unity avoidance for maps tangent to the identity at torsion units.
\end{abstract}

\maketitle

\section{Introduction}

The Dynamical Mordell--Lang problem asks for the structure of the set
\[
        S(x,V)=\{n\in\Np: \varphi^n(x)\in V\}
\]
where $\varphi:X\to X$ is an algebraic self-map, $x\in X$, and $V\subseteq X$ is a subvariety.  In characteristic zero the expected structure is a finite union of arithmetic progressions.  A central method in favorable cases is $p$-adic interpolation: after passing to finitely many residue classes of the time variable, one interpolates the corresponding subsequences of the orbit by analytic functions on $\Zp$, and then applies Strassmann's theorem to the defining equations of $V$.  This is the local analytic mechanism behind the Bell--Ghioca--Tucker proof for \'{e}tale maps \cite{BGT2010}; Poonen gives a concise interpolation theorem for maps on a closed $p$-adic polydisc that are sufficiently close to the identity \cite{Poonen2014}.  For broader context on the dynamical Mordell--Lang conjecture and its $p$-adic variants, see Xie's survey \cite{Xie2023}.

This paper isolates the local zero-bound certification problem that appears after this interpolation step.  Let
\[
        \A=\Zp\langle x_1,\ldots,x_d\rangle
\]
be the Tate algebra of restricted power series over $\Zp$.  Thus $\A^d$ denotes
$d$-tuples of scalar-valued restricted power series.  We work with a map
\begin{equation}\label{eq:intro-local-map}
        f(x)=x+p^q\Phi(x),
        \qquad \Phi\in\A^d,
        \qquad q(p-1)>1,
\end{equation}
with $a\in\Zp^d$ and $F\in\Zp[x_1,\ldots,x_d]$.  For scalar-valued $G\in\A$, put
\[
        \T:\A\to\A,\qquad \T(G)=G\circ f,\qquad \Delta=\T-I.
\]
The usual $p$-adic interpolation gives
\begin{equation}\label{eq:intro-H}
        H_F(z)=\sum_{m\ge0}B_m\binom zm,
        \qquad B_m=(\Delta^mF)(a),
\end{equation}
with
\[
        H_F(n)=F(f^n(a))\qquad(n\in\Np).
\]
If $H_F\not\equiv0$, Strassmann's theorem bounds the number of zeros of $H_F$ in $\Zp$, hence the number of hitting times in this residue class.  The effective question studied here is:
\begin{quote}
\emph{Can one compute, or certify from finite data, the exact Strassmann index that controls the zeros of $H_F$?}
\end{quote}
The answer is affirmative under \eqref{eq:intro-local-map}.

The following ingredients are standard and are used here as black boxes: Mahler expansions for $p$-adic functions \cite{Mahler1958}, Strassmann's theorem \cite{Strassmann1928}, Poonen's interpolation theorem for iterates near the identity \cite{Poonen2014}, and the Weierstrass preparation and inverse-function theorems for non-archimedean analytic functions \cite{BoschGuntzerRemmert1984,Robert2000}.  The novelty of this paper is the effective layer after interpolation has been obtained.

This layer is not automatic from Strassmann's theorem alone.  Strassmann gives a zero bound once the ordinary power-series coefficients are known with enough precision, whereas the interpolants arising in the $p$-adic DML method are naturally given by Mahler coefficients, finite orbit values, or approximate $p$-adic computations.  Passing from those data to ordinary coefficients introduces factorial denominators and analytic tails.  The results below turn the analytic existence theorem into explicit finite certificates for the Strassmann index, together with residue-class refinements and multiequation gcd certificates.  They certify the index, and hence a zero bound; except in the one-dimensional orbit-ball situation, they do not assert that every zero of the interpolant is an ordinary nonnegative hitting time.

More precisely, the present paper provides:
\begin{enumerate}[label=\textup{(\arabic*)}]
\item a finite certificate for the exact Strassmann index of $H_F$ from finitely many orbit values;
\item a finite-precision version showing when approximate $p$-adic data determine the same certified index;
\item coefficientwise tail refinements using signed Stirling numbers;
\item adaptive residue-class certificates: on the class $b+p^h\Zp$ the method replaces $f$ by $f^{p^h}$ and gains $h$ additional powers of $p$ in the tail estimates;
\item an arc-ideal, or one-variable gcd, certificate for target varieties defined by several equations;
\item a one-shot upper bound from one nonzero value $F(f^{n_0}(a))$;
\item a one-dimensional contact theory identifying the certified Strassmann index with root counting in the $p$-adic orbit ball;
\item concrete arithmetic-dynamical applications to power maps on $1+p\Zp$ and to root-of-unity avoidance for maps tangent to the identity at torsion units.
\end{enumerate}
Thus the results are local and effective.  They do not produce a new global Dynamical Mordell--Lang theorem.  Rather, if a global argument has already reduced a residue class to a local map congruent to the identity, the results below give certified zero bounds and rigorous stopping criteria for the resulting analytic interpolant.

\subsection{Main results}

Throughout this paper, we use the convention
\[
        \vp(0)=+\infty.
\]

Let
\[
        D=q(p-1)-1>0
\]
and define
\begin{equation}\label{eq:intro-lambda}
        \lambda_{p,q}(M)=
        \left\lceil\frac{D(M+1)+1}{p-1}\right\rceil.
\end{equation}
Write
\[
        y_i=F(f^i(a)),
        \qquad
        B_m=\sum_{i=0}^m(-1)^{m-i}\binom mi y_i.
\]
Let $s(m,j)$ be the signed Stirling numbers of the first kind, defined by
\[
        z(z-1)\cdots(z-m+1)=\sum_{j=0}^m s(m,j)z^j.
\]
For $0\le j\le M$, define
\begin{equation}\label{eq:intro-C}
        C_j^{(M)}=
        \sum_{m=j}^M B_m\frac{s(m,j)}{m!}.
\end{equation}

\begin{theorem}\label{thm:intro-certificate}
Assume $H_F\not\equiv0$.  Put
\[
        \alpha_M=\min_{0\le j\le M}\vp(C_j^{(M)}).
\]
If
\[
        \alpha_M<\lambda_{p,q}(M),
\]
then
\[
        \SI(H_F)=
        \max\{0\le j\le M:\vp(C_j^{(M)})=\alpha_M\}.
\]
Moreover, if $H_F\not\equiv0$, this test succeeds for all sufficiently large $M$.
\end{theorem}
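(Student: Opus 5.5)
The plan is to compare the truncated coefficients $C_j^{(M)}$ with the genuine Taylor coefficients of $H_F$ at $z=0$, and to show that they differ only at valuations $\ge\lambda_{p,q}(M)$. The comparison rests on a single estimate for the Mahler coefficients, $\vp(B_m)\ge qm$. The statement then follows from the usual Newton-polygon bookkeeping: the minimum valuation of the coefficients and the last index where it is attained are insensitive to perturbations of strictly larger valuation.

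\emph{Step 1: divisibility of $\Delta$.} For $G\in\A$, Taylor expansion in the Tate algebra gives
\[
\Delta G=G(x+p^q\Phi)-G(x)=\sum_{|k|\ge1}\frac{\partial^kG}{k!}(x)\,(p^q\Phi)^k .
\]
The divided derivatives $\partial^k G/k!$ have $\Zp$-coefficients. Hence $\Delta(p^r\A)\subseteq p^{r+q}\A$, and so $\Delta^m F\in p^{qm}\A$. Since $F\in\Zp[x]$, evaluating at $a\in\Zp^d$ gives $\vp(B_m)\ge qm$. This is consistent with $B_m=\sum_i(-1)^{m-i}\binom mi y_i$, because $(\T^iF)(a)=F(f^i(a))$.

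\emph{Step 2: tail estimate and Taylor coefficients.} By Legendre's formula, $\vp(m!)\le (m-1)/(p-1)$ for $m\ge1$. Together with $s(m,j)\in\mathbb Z$ this gives
\[
\vp\!\left(B_m\frac{s(m,j)}{m!}\right)\ge qm-\frac{m-1}{p-1}=\frac{Dm+1}{p-1}.
\]
Since $D>0$, this bound tends to $\infty$ uniformly in $j$. The double series $\sum_m B_m\sum_j s(m,j)z^j/m!$ therefore converges absolutely on $\Zp$, and rearranging gives $H_F(z)=\sum_j c_jz^j$ with
\[
c_j=\sum_{m\ge j}B_m\frac{s(m,j)}{m!}, \qquad c_j\to0 .
\]
For $m\ge M+1$ the bound is $\ge (D(M+1)+1)/(p-1)$. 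Since valuations of elements of $\Qp$ are integers, the ultrametric inequality then yields
\[
\vp\bigl(c_j-C_j^{(M)}\bigr)\ge\lambda_{p,q}(M)\quad (j\le M),
\qquad
\vp(c_j)\ge\lambda_{p,q}(M)\quad (j>M).
\]

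\emph{Step 3: the certificate.} Assume $\alpha_M<\lambda:=\lambda_{p,q}(M)$. For $j\le M$, if $\vp(C_j^{(M)})<\lambda$ then $\vp(c_j)=\vp(C_j^{(M)})$; otherwise $\vp(c_j)\ge\lambda>\alpha_M$. For $j>M$ we also have $\vp(c_j)\ge\lambda>\alpha_M$. Hence $\min_j\vp(c_j)=\alpha_M$, and this minimum is attained exactly at those $j\le M$ with $\vp(C_j^{(M)})=\alpha_M$. Since $\SI(H_F)$ is by definition the largest index attaining the minimal coefficient valuation, the stated formula follows.

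\emph{Step 4: the test eventually succeeds.} If $H_F\not\equiv0$, then some $c_j\neq0$. Because $c_j\to0$, the quantity $\alpha=\min_j\vp(c_j)$ is finite and is attained at finitely many indices, the largest being $N$. Take $M\ge N$ large enough that $\lambda_{p,q}(M)>\alpha$; this is possible because $\lambda_{p,q}(M)\to\infty$. By Step 2, $\vp(C_j^{(M)})=\vp(c_j)$ whenever $\vp(c_j)<\lambda$, and $\vp(C_j^{(M)})\ge\lambda>\alpha$ otherwise. Hence $\alpha_M=\alpha<\lambda_{p,q}(M)$, and the test succeeds.

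The main point requiring care is Step 1: one must verify $\vp(B_m)\ge qm$ with exactly the constant $q$, since the threshold $\lambda_{p,q}(M)$ is calibrated to it. It is also Step 1, via the uniform bound in Step 2, that legitimizes interchanging the Mahler and Taylor sums. The remaining steps are routine ultrametric comparisons.
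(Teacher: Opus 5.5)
Your proposal is correct and follows essentially the same route as the paper. The paper also derives the tail bound $\vp\bigl(B_m s(m,j)/m!\bigr)\ge (Dm+1)/(p-1)$ from $\vp(B_m)\ge qm$ and Legendre's formula (\cref{lem:tail}), then compares $C_j^{(M)}$ with $c_j$ ultrametrically (\cref{thm:certificate}), and proves termination the same way (\cref{cor:termination}); the only difference is that you prove the divisibility $\Delta^m(\A)\subseteq p^{qm}\A$ and the Mahler-to-Taylor rearrangement directly, where the paper takes them from \eqref{eq:delta-divisibility} and \cref{lem:analytic-powers}.
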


A finite-precision form is also available.  Let
\[
        V_M=\max_{0\le m\le M}\vp(m!).
\]
If $y_0,\ldots,y_M$ are known modulo $p^R$, then the same certificate is valid when the observed minimum is below both the analytic tail and the numerical error.

\begin{theorem}\label{thm:intro-finite-precision}
Let $\widetilde C_j^{(M)}$ be computed from arbitrary lifts of $y_0,\ldots,y_M$ known modulo $p^R$, and put
\[
        \widetilde\alpha_M=
        \min_{0\le j\le M}\vp(\widetilde C_j^{(M)}).
\]
If
\[
        \widetilde\alpha_M<
        \min\{\lambda_{p,q}(M),\,R-V_M\},
\]
then
\[
        \SI(H_F)=
        \max\{0\le j\le M:\vp(\widetilde C_j^{(M)})=\widetilde\alpha_M\}.
\]
\end{theorem}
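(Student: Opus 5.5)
The plan is to reduce \cref{thm:intro-finite-precision} to \cref{thm:intro-certificate}, or more precisely to the mechanism behind it. That mechanism is a comparison between $C_j^{(M)}$ and the true power-series coefficient $c_j$ of $H_F(z)=\sum_j c_j z^j$. I take it to give
\[
\vp\bigl(c_j-C_j^{(M)}\bigr)\ge\lambda_{p,q}(M)\quad(0\le j\le M),\qquad \vp(c_j)\ge\lambda_{p,q}(M)\quad(j>M).
\]
The first bound comes from the Mahler tail $\sum_{m>M}B_m s(m,j)/m!$, and the second from the analytic estimate $\vp(B_m)\ge$ roughly $(q(p-1))m/(p-1)$ (coming from $\Delta=p^q(\cdots)$ on $\A$) set against $\vp(m!)\le (m-1)/(p-1)$. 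If the proof of \cref{thm:intro-certificate} records exactly these bounds, I will quote them; otherwise I will extract them from it. With them in hand, the new content is only a perturbation estimate for the numerical error.

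\textbf{Step 1 (numerical error).} Let $\tilde y_i=y_i+p^R e_i$ with $e_i\in\Zp$. The values $y_i=F(f^i(a))$ lie in $\Zp$ because $F$ has $\Zp$-coefficients and $f$ preserves $\Zp^d$. The error in $B_m$ is $p^R\sum_i(-1)^{m-i}\binom mi e_i$, which has valuation at least $R$. Since $s(m,j)\in\mathbb Z$, each error term of $\widetilde C_j^{(M)}-C_j^{(M)}=\sum_{m=j}^M(\widetilde B_m-B_m)s(m,j)/m!$ has valuation at least $R-\vp(m!)\ge R-V_M$. Hence $\vp(\widetilde C_j^{(M)}-c_j)\ge \mu:=\min\{\lambda_{p,q}(M),R-V_M\}$ for all $j\le M$, and $\vp(c_j)\ge\lambda_{p,q}(M)\ge\mu$ for $j>M$.

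\textbf{Step 2 (ultrametric transfer).} Suppose $\widetilde\alpha_M<\mu$. For $j\le M$ with $\vp(\widetilde C_j^{(M)})<\mu$, the strong triangle inequality gives $\vp(c_j)=\vp(\widetilde C_j^{(M)})$. For the other $j\le M$, and for all $j>M$, we get $\vp(c_j)\ge\mu>\widetilde\alpha_M$. Therefore $\min_j\vp(c_j)=\widetilde\alpha_M$, and the set of indices achieving this minimum is exactly $\{j\le M:\vp(\widetilde C_j^{(M)})=\widetilde\alpha_M\}$. Since $\SI(H_F)$ is by definition the largest index at which $\vp(c_j)$ attains its minimum, the formula follows. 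One point needs care: convergence of $H_F$ on $\Zp$, i.e.\ $\vp(c_j)\to\infty$, holds with the same growth as in \cref{thm:intro-certificate}, so the minimum is attained and $H_F\not\equiv0$ is automatic from $\widetilde\alpha_M<\infty$.

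\textbf{Main obstacle.} The only real work is the uniform tail bound $\vp(c_j-C_j^{(M)})\ge\lambda_{p,q}(M)$. In particular it must hold uniformly in $j$, including $j>M$, and the exact shape $\lceil (D(M+1)+1)/(p-1)\rceil$ has to come out. I expect this to be supplied by the proof of \cref{thm:intro-certificate}. If it is not, I would bound $\vp(B_m s(m,j)/m!)\ge \vp(B_m)-\vp(m!)$, using $\vp(B_m)\ge qm$, which comes from $\Delta\A\subseteq p^q\A$ since $G\circ f-G\in p^q\A$. Then $qm-\vp(m!)\ge qm-(m-1)/(p-1)=(Dm+1)/(p-1)$. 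This is increasing in $m$, so for $m\ge M+1$ it is at least $(D(M+1)+1)/(p-1)$, and integrality of valuations gives the ceiling.
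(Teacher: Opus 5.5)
Your proposal is correct and follows the paper's proof. The paper likewise places $\widetilde C_j^{(M)}-C_j^{(M)}$ in $p^{R-V_M}\Zp$, using the integral finite-difference binomials and the factorial denominators, and combines this with the tail bound of \cref{lem:tail} to carry the observed minimum and its last index over to the true coefficients. The only difference is cosmetic: you compare $\widetilde C_j^{(M)}$ with $c_j$ in a single ultrametric step, which also makes explicit that $H_F\not\equiv0$ follows from the hypothesis, whereas the paper first identifies $\widetilde\alpha_M$ with $\alpha_M$ and then invokes \cref{thm:certificate}.
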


Another contribution is an adaptive residue-class form of the certificate.  For
$h\ge0$ and $0\le b<p^h$, put
\[
        H_{F;b,h}(u)=H_F(b+p^h u).
\]
Then
\[
        H_{F;b,h}(u)=F\bigl((f^{p^h})^u(f^b(a))\bigr),
\]
and the iterate $f^{p^h}$ has the stronger congruence
\[
        f^{p^h}(x)=x+p^{q+h}\Phi_h(x),\qquad \Phi_h\in\A^d.
\]
Thus every residue class of times has its own finite certificate with $q$
replaced by $q+h$.  This gives a branch-and-bound procedure for local DML:
certify easy classes, split difficult classes into smaller congruence classes,
and record any class on which the interpolant vanishes identically as a full
arithmetic progression.

For target varieties defined by several equations, the paper also introduces
an arc-ideal certificate.  If $V$ is cut out by an ideal $I(V)$, the analytic
ideal
\[
        I_{\Gamma_a}(V)=\langle F(\Gamma_a(z)):F\in I(V)\rangle
        \subset \Qp\langle z\rangle
\]
is principal when it is nonzero.  Its generator $G_{\Gamma_a,V}$ has the same
zero set on $\Zp$ as the intersection of the interpolated orbit with $V$, so
\[
        \#\{n\in\Np:f^n(a)\in V\}\le \SI(G_{\Gamma_a,V}).
\]
This can be strictly sharper than applying Strassmann to a single defining
equation.

In dimension one the index has a geometric interpretation.

\begin{theorem}\label{thm:intro-one-dim}
Assume $d=1$, let $F\in\Zp[x]$ be nonzero, and let $a\in\Zp$.  If $f(a)=a$, then $F(f^n(a))$ is constant in $n$.  If $f(a)\ne a$, set
\[
        s=\vp(f(a)-a).
\]
Then the interpolated arc $\Gamma_a(z)=f^z(a)$ is an analytic isomorphism
\[
        \Gamma_a:\Zp\xrightarrow{\sim} a+p^s\Zp.
\]
Consequently
\[
        \SI(F(\Gamma_a(z)))
        =\SI(F(a+p^s w))
        =\sum_{\substack{\alpha\in\OCp:\,F(\alpha)=0\\
        \vp(\alpha-a)\ge s}}m_\alpha(F),
\]
where $m_\alpha(F)$ denotes root multiplicity.  At every non-fixed zero hit by the arc, the order of contact equals the multiplicity of the corresponding root of $F$.
\end{theorem}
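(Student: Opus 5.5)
We need to prove Theorem \ref{thm:intro-one-dim}. The plan is to show that the interpolated arc is a bi-analytic bijection onto the ball $a+p^s\Zp$, then transport the Strassmann index through that change of variables, and finally count zeros with Weierstrass preparation.

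\emph{Fixed case.} If $f(a)=a$, then $f^n(a)=a$ for all $n$. Hence $F(f^n(a))=F(a)$ is constant; equivalently all $B_m$ with $m\ge1$ vanish.

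\emph{Structure of the arc.} Now assume $f(a)\neq a$. Write $\Gamma_a(z)=f^z(a)$, the Mahler series $\sum_m (\Delta^m x)(a)\binom zm$ given by the interpolation theorem. The estimates behind \eqref{eq:intro-lambda} show it is a power series in $z$ whose coefficients have valuations growing at least linearly.

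First, I would show $\Gamma_a(\Zp)\subseteq a+p^s\Zp$. Since $f(x)-x=p^q\Phi(x)$ with $\Phi$ restricted, a Lipschitz estimate gives
\[
f(x)-x\equiv f(a)-a \pmod{p^{\vp(x-a)+q}}.
\]
So on $a+p^s\Zp$ one has $\vp(f(x)-x)=s$ exactly, because $q>0$. By induction $f^n(a)\in a+p^s\Zp$ for all $n$, and continuity extends this to all $z\in\Zp$.

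Next, I would compute the derivative. From $\Gamma_a(z+1)=f(\Gamma_a(z))$ and the standard flow description $\Gamma_a'(z)=\chi(\Gamma_a(z))$, where $\chi=\log$ of $f$ as a vector field, the estimate $\chi(x)=(f(x)-x)(1+O(p^{q}))$ yields
\[
\vp(\Gamma_a'(z))=s \quad\text{for all } z\in\Zp.
\]
To avoid needing the logarithm, I would alternatively expand $\Gamma_a(z)=a+\sum_{k\ge1}c_kz^k$ directly from the Mahler coefficients. Here $c_1=\sum_m (\Delta^m x)(a)\,s(m,1)/m!$, and $(\Delta^m x)(a)$ has valuation $\ge s+(m-1)q$. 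Since $\vp(m!)\le (m-1)/(p-1)$ and $q(p-1)>1$, this gives $\vp(c_1)=s$ and $\vp(c_k)>s$ for $k\ge2$. This second route is the one I would actually carry out, using $D>0$.

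Then $\Gamma_a(z)=a+p^s\,u(z)$ with
\[
u(z)=\epsilon z+p\,(\text{restricted series}),\qquad \epsilon\in\Zp^\times .
\]
By the non-archimedean inverse function theorem (Weierstrass degree one), $u:\Zp\to\Zp$ is an analytic isomorphism with restricted inverse. The same holds on $\OCp$.

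\emph{Transport of the index.} The Strassmann index is the Weierstrass degree, i.e.\ the number of zeros counted with multiplicity in the closed unit disc over $\Cp$. It is invariant under composition with an analytic automorphism $w=u(z)$ of the unit disc, because multiplicities of zeros are preserved when the derivative is a unit. Hence
\[
\SI\bigl(F(\Gamma_a(z))\bigr)=\SI\bigl(F(a+p^sw)\bigr).
\]
The latter polynomial in $w$ has zeros in $\vp(w)\ge0$ exactly at $w=(\alpha-a)/p^s$ for roots $\alpha$ with $\vp(\alpha-a)\ge s$, with the same multiplicities. Weierstrass preparation then gives the stated sum.

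\emph{Order of contact.} For the contact statement, let $z_0$ be a zero of $F(\Gamma_a(z))$ and $\alpha=\Gamma_a(z_0)$. Then $\Gamma_a'(z_0)\neq0$, so the chain rule shows that the order of vanishing of $F\circ\Gamma_a$ at $z_0$ equals $m_\alpha(F)$.

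\emph{Main obstacle.} The hardest step is the coefficient estimate showing $\vp(c_1)=s$ and $\vp(c_k)>s$ for $k\ge2$. In particular one must check that the $m\ge2$ Mahler terms cannot cancel the leading term. This requires
\[
\vp\bigl((\Delta^m x)(a)\bigr)\ge s+(m-1)q,
\]
which I would prove by induction, using $\Delta^m x=\Delta^{m-1}(p^q\Phi)$ together with the fact that each application of $\Delta$ to a restricted series gains a factor $p^q$. The extra factor $p^{s}$ is carried along via the Lipschitz estimate.
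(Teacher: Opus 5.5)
Your proposal is correct and follows essentially the paper's route. The key estimate $(\Delta^m x)(a)\in\delta\,p^{q(m-1)}\Zp$ is \cref{lem:one-delta-factor}; the paper proves it through the stronger Tate-algebra factorization $\Delta^m x=g\,Q_m$, using $g\circ f=g(1+p^qR)$. From there, $q(m-1)-\vp(m!)\ge1$ gives $\Gamma_a=a+\delta(z+p\Theta_a)$, and the inverse-function theorem and Weierstrass invariance finish exactly as you describe.

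The induction you sketch for that estimate is not needed. Since $(\Delta^m x)(a)=(\Delta^{m-1}x)(f(a))-(\Delta^{m-1}x)(a)$ and $\Delta^{m-1}x\in p^{q(m-1)}\A$, the Lipschitz bound for integral restricted series gives the estimate in one line. Your chain-rule proof of the contact statement, which uses that $\Gamma_a'$ is $\delta$ times a unit on $\Zp$, is a valid substitute for the paper's flow-law argument in \cref{thm:contact-formula}.
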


The one-dimensional theorem gives a direct application to the arithmetic dynamical system
\[
        \varphi_r(T)=T^r
\]
on the $p$-adic unit group.  If $p\ge3$, $r\equiv1\pmod p$, and $\alpha\in1+p\Zp$, the residue disc $1+p\Zp$ is invariant under $\varphi_r$.  For a non-fixed starting point, i.e.\ assuming $\alpha^r\ne\alpha$, write
\[
        \rho=\vp(\alpha^r-\alpha).
\]
Then, for every finite target set $B\subset 1+p\Zp$, one obtains the certified bound
\[
        \#\{n\ge0: \alpha^{r^n}\in B\}
        \le
        \#\{\beta\in B: \vp(\beta-\alpha)\ge \rho\},
\]
with the evident multiplicity version.  This is proved in \cref{sec:power-map-application}; it gives a genuine
arithmetic-dynamical use of the local certificate, since once the certified
root-count bound is exhausted by a finite search, it proves the absence of
further solutions to the corresponding exponential-orbit equations.

A second application concerns torsion targets.  If $\zeta\in\Zp$ is a root of unity and $f(\zeta)\ne\zeta$ for a one-dimensional map $f(x)=x+p^q\Phi(x)$ with $q(p-1)>1$, then for every $N\ge1$ one has
\[
        \{n\in\Np:f^n(\zeta)^N=1\}
        =
        \begin{cases}
        \{0\}, & \zeta^N=1,\\[2pt]
        \varnothing, & \zeta^N\ne1.
        \end{cases}
\]
Equivalently, the local Strassmann index of $\Gamma_\zeta(z)^N-1$ is $1$ or $0$, uniformly in $N$; see \cref{sec:roots-unity-application}.

The results give the following concrete outputs.
\begin{enumerate}[label=\textup{(\arabic*)}]
\item If the certificate returns $N$, then the local equation $F(f^n(a))=0$ has at most $N$ solutions $n\in\Np$.
\item The finite-precision theorem gives a rigorous stopping rule for computations with approximate $p$-adic orbit values.
\item Residue-class zooming gives independent certificates for the subclasses $n\equiv b\pmod {p^h}$, with stronger tails as $h$ increases.
\item For multiequation targets, the arc-gcd certificate gives the Strassmann index of the one-variable generator cutting out the intersection along the analytic arc, rather than the index of a single chosen equation.
\item A single nonzero value $F(f^{n_0}(a))$ gives an immediate a priori zero bound.
\item In dimension one, the certificate is equivalent to root localization in the orbit ball $a+p^s\OCp$.
\item For power maps $T\mapsto T^r$ on $1+p\Zp$, the certificate becomes an explicit bound for intersections of the exponential orbit $\{\alpha^{r^n}:n\ge0\}$ with finite target sets.
\item For torsion units, the root-of-unity application gives a sharp uniform return bound for the whole family of divisors $x^N=1$.
\end{enumerate}

\Cref{sec:standard-facts} records the standard $p$-adic tools used without proof.  \Cref{sec:setup} fixes the local setup and constructs the interpolant.  \Cref{sec:certificate} proves the finite certificate and its termination.  \Cref{sec:precision} gives finite-precision and refined-tail versions.  \Cref{sec:zooming} introduces adaptive residue-class certificates.  \Cref{sec:arc-ideals} develops the arc-ideal and projection certificates for multiequation and finite-target problems.  \Cref{sec:bounds} gives one-shot and first-order escape criteria.  \Cref{sec:one-dimensional} develops the one-dimensional contact theory.  \Cref{sec:power-map-application} gives an arithmetic-dynamical application to power maps, and \cref{sec:roots-unity-application} gives a second application to root-of-unity avoidance.  \Cref{sec:examples} gives additional examples.  \Cref{sec:global} explains the global-to-local role and finite-extension form.  \Cref{sec:limitations} discusses limitations and further extensions.

\section{Standard \texorpdfstring{$p$}{p}-adic facts used in the note}\label{sec:standard-facts}

This section lists the background results used later.  They are standard and are cited rather than reproved.

\begin{fact}\label{fact:strassmann}
Let $K$ be a complete non-archimedean field with valuation ring $R$, and let
\[
        H(z)=\sum_{j\ge0}c_jz^j
\]
be a nonzero restricted power series with coefficients in $K$.  After scaling so that the coefficients lie in $R$ and at least one coefficient is a unit, let
\[
        N=\max\{j: |c_j|=\max_i |c_i|\}.
\]
Then $H$ has at most $N$ zeros in $R$, counted with multiplicity.  In the notation of this paper, $N=\SI(H)$.  See Stra{\ss}mann's original paper \cite{Strassmann1928}; standard textbook treatments are also given in \cite{Koblitz1984,Robert2000}.
\end{fact}

\begin{fact}\label{fact:mahler}
For $z\in\Zp$, the binomial coefficients $\binom zm$ lie in $\Zp$, and Mahler expansions express $p$-adic continuous functions on $\Zp$ in the binomial basis $\{\binom zm\}_{m\ge0}$.  The signed Stirling expansion
\[
        \binom zm=\frac1{m!}\sum_{j=0}^m s(m,j)z^j
\]
converts Mahler coefficients into ordinary power-series coefficients.  See Mahler \cite{Mahler1958} and the discussion of Mahler expansions in \cite{Robert2000}.
\end{fact}

\begin{fact}\label{fact:poonen}
Let $\A=\Zp\langle x_1,\ldots,x_d\rangle$ and let
\[
        f(x)=x+p^q\Phi(x),\qquad \Phi\in\A^d,
\]
with $q(p-1)>1$.  Then the iterates of $f$ admit a $p$-adic analytic interpolation in the time variable: there are analytic functions
\[
        G_i(x,z)\in\A\langle z\rangle\qquad(1\le i\le d)
\]
such that
\[
        G(x,n)=f^n(x)\qquad(n\in\Np).
\]
This is the local form of Poonen's interpolation theorem for maps sufficiently close to the identity \cite{Poonen2014}.  It is the same analytic input used in the $p$-adic Dynamical Mordell--Lang method, as in Bell--Ghioca--Tucker \cite{BGT2010}.
\end{fact}

\begin{fact}\label{fact:legendre}
For $m\ge1$,
\[
        \vp(m!)=\frac{m-s_p(m)}{p-1},
\]
where $s_p(m)$ is the sum of the base-$p$ digits of $m$.  This standard formula will be used only to bound denominators.
\end{fact}

\begin{fact}\label{fact:weierstrass}
Let $G(w)=\sum_{j\ge0}g_jw^j\in\Qp\langle w\rangle$ be nonzero.  Its Weierstrass degree is
\[
        \wdeg(G)=\max\{j:\vp(g_j)=\min_i\vp(g_i)\}=\SI(G).
\]
By the non-archimedean Weierstrass preparation theorem, $\wdeg(G)$ is the number of zeros of $G$ in the closed unit disc over $\Cp$, counted with multiplicity.  Moreover, analytic automorphisms of the closed unit disc preserve this degree and preserve zero divisors with multiplicity.  See \cite[Ch.~5]{BoschGuntzerRemmert1984} and \cite[Chs.~6--7]{Robert2000}.
\end{fact}

\begin{fact}\label{fact:tate-principal}
For a complete non-archimedean field $K$, every nonzero ideal of the
one-variable Tate algebra $K\langle z\rangle$ is principal.  Equivalently, any
finite family of nonzero functions in $K\langle z\rangle$ has a greatest common
divisor, unique up to multiplication by a unit.  This is a standard consequence
of Weierstrass division; see \cite[Ch.~5]{BoschGuntzerRemmert1984} and
\cite[Chs.~6--7]{Robert2000}.
\end{fact}

\begin{fact}\label{fact:inverse}
If $U(z)\in\Zp\langle z\rangle$ satisfies
\[
        U(z)\equiv uz\pmod p
\]
with $u\in\Zp^\times$, then $U$ is an analytic automorphism of the closed unit disc.  This is the one-variable $p$-adic inverse-function theorem; see \cite{BoschGuntzerRemmert1984,Robert2000}.
\end{fact}

\begin{fact}\label{fact:roots-unity-separation}
Let $\rho\in\Cp$ be a root of unity with $\rho\ne1$.  Then
\[
        \vp(\rho-1)\le \frac1{p-1}.
\]
Equivalently, if $\xi$ and $\eta$ are roots of unity and
\[
        \vp(\eta-\xi)>\frac1{p-1},
\]
then $\eta=\xi$.  This follows from the standard valuation formula for cyclotomic $p$-power roots of unity, together with the fact that roots of unity of order prime to $p$ have distinct reductions; see, for example, \cite[Ch.~3]{Koblitz1984} or \cite[Ch.~6]{Robert2000}.
\end{fact}

\section{Local setup and the interpolant}\label{sec:setup}

Fix a prime $p$ and normalize $\vp(p)=1$.  Let
\[
        \A=\Zp\langle x_1,\ldots,x_d\rangle.
\]
Thus $\A$ consists of scalar-valued restricted power series, while $\A^d$
denotes $d$-tuples of such series.  Throughout the main part of the note we assume
\begin{equation}\label{eq:q-condition}
        q\ge1,
        \qquad q(p-1)>1,
\end{equation}
and
\begin{equation}\label{eq:f-standing}
        f(x)=x+p^q\Phi(x),
        \qquad \Phi\in\A^d.
\end{equation}
Let $\T:\A\to\A$ be the composition operator
\[
        \T(G)=G\circ f\quad(G\in\A),
        \qquad \Delta=\T-I.
\]
Since $f\equiv\id\pmod{p^q}$, one has
\begin{equation}\label{eq:delta-divisibility}
        \Delta(\A)\subseteq p^q\A,
        \qquad \Delta^m(\A)\subseteq p^{qm}\A.
\end{equation}
This elementary divisibility is the source of every effective estimate below.  The next lemma records the operator calculus needed for the notation used throughout the note.  The interpolation theorem of Poonen supplies the analytic functions; the operator identities are elementary consequences of the binomial calculus and the topological nilpotence of $\Delta$.

\begin{lemma}\label{lem:analytic-powers}
For every $G\in\A$ and every $z\in\Zp$, the Mahler series
\[
        \T^zG:=(I+\Delta)^zG
        =\sum_{m\ge0}\binom zm\Delta^mG
\]
converges in $\A$.  If $q(p-1)>1$, then $z\mapsto \T^zG$ is represented by an element of $\A\langle z\rangle$.  The operators $\T^z$ satisfy
\[
        \T^{z+t}=\T^z\T^t\qquad(z,t\in\Zp),
\]
each $\T^z$ is a continuous $\Zp$-algebra automorphism of $\A$, and for $n\in\Np$ one has
\[
        \T^nG=G\circ f^n.
\]
\end{lemma}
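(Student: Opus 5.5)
The plan is to derive each claim from the divisibility \eqref{eq:delta-divisibility} together with the Mahler and Stirling calculus of \cref{fact:mahler}, working in the Banach norm of $\A$ (Gauss norm, $\|\cdot\|\le1$ on $\A$).

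\emph{Convergence.} Since $\|\Delta^mG\|\le p^{-qm}\|G\|$ and $\binom zm\in\Zp$ for $z\in\Zp$, the series $\sum_m\binom zm\Delta^mG$ converges in $\A$ for each $z$. Continuity in $z$ and continuity of $G\mapsto\T^zG$ follow from the same uniform bound.

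\emph{Analyticity in $z$.} I would expand $\binom zm=\frac1{m!}\sum_{j}s(m,j)z^j$ and regroup. This gives
\[
\T^zG=\sum_{j\ge0}z^j\sum_{m\ge j}\frac{s(m,j)}{m!}\Delta^mG .
\]
By \cref{fact:legendre}, $\vp(m!)\le (m-1)/(p-1)$ for $m\ge1$. Hence the inner coefficient has norm at most $p^{-(qm-(m-1)/(p-1))}$, and
\[
qm-\frac{m-1}{p-1}=\frac{D m+1}{p-1}\to\infty,
\]
because $D=q(p-1)-1>0$. So each inner sum converges in $\A\otimes\Qp$. The $j$-th coefficient has valuation at least $\min_{m\ge j}(Dm+1)/(p-1)\ge (Dj+1)/(p-1)$, which tends to infinity. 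Therefore $\T^zG$ lies in $\A\langle z\rangle$, at least after tensoring with $\Qp$. Integrality of the coefficients in $\A$ itself needs care. Integrality on $\Zp$-points is clear, but the coefficients could a priori have bounded denominators. Here I would invoke \cref{fact:poonen} instead: $\T^zG=G(G_1(x,z),\dots,G_d(x,z))$ with each $G_i\in\A\langle z\rangle$. This expression agrees with the Mahler series at every $n\in\Np$, hence everywhere by continuity and density of $\Np$ in $\Zp$. Composition of restricted series with integral coefficients stays in $\A\langle z\rangle$ because $G_i(x,z)$ has integral coefficients.

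\emph{Algebraic identities.} For $n\in\Np$, the binomial theorem for the commuting operators $I,\Delta$ gives $\T^n=(I+\Delta)^n=\T\circ\cdots\circ\T$, so $\T^nG=G\circ f^n$. Each $\T^n$ is a $\Zp$-algebra endomorphism, and $\T^{n+k}=\T^n\T^k$. I would extend both facts to $z,t\in\Zp$ by density. The maps $(z,t)\mapsto\T^{z+t}G$ and $(z,t)\mapsto\T^z\T^tG$ are continuous; the second is continuous because the operators $\T^z$ are uniformly bounded (norm $\le1$) and $z\mapsto\T^zH$ is continuous uniformly in $\|H\|\le1$. They agree on $\Np^2$, so they agree on $\Zp^2$. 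Similarly, $\T^z(GH)=\T^zG\cdot\T^zH$ holds for $z\in\Np$ and hence for all $z$. Finally, $\T^{-z}$ is a two-sided inverse of $\T^z$, so each $\T^z$ is an automorphism.

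The main obstacle is the integrality claim in the analytic-representation step, since the Stirling regrouping alone only gives coefficients with valuation at least $(Dj+1)/(p-1)$, possibly noninteger but positive. That bound in fact already forces integrality of each coefficient whenever it is nonnegative, which it is. So I would first try to close the step by this direct estimate, and fall back on \cref{fact:poonen} only if needed.
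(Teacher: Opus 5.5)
Your proposal is correct and follows essentially the paper's proof: convergence from $\Delta^m(\A)\subseteq p^{qm}\A$, Stirling regrouping with the Legendre bound $qm-\vp(m!)\ge (Dm+1)/(p-1)$, density of $\Np$ for multiplicativity, and $\T^{-z}$ as the inverse. The one difference is that you get $\T^{z+t}=\T^z\T^t$ from joint continuity and density of $\Np^2$ in $\Zp^2$, whereas the paper substitutes Vandermonde's identity into the binomial series; both are valid. Your fallback to Poonen's theorem for integrality is unnecessary, as you yourself conclude, because $qm-\vp(m!)$ is a nonnegative integer, so every summand $\frac{s(m,j)}{m!}\Delta^mG$ already lies in $\A$.
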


\begin{proof}
Since $\Delta^mG\in p^{qm}\A$ and $\binom zm\in\Zp$, the defining series converges in the Banach algebra $\A$ for every fixed $z\in\Zp$.  For the assertion in $\A\langle z\rangle$, write
\[
        \binom zm=\frac1{m!}\sum_{j=0}^m s(m,j)z^j.
\]
The coefficient of $z^j$ in $\T^zG$ is
\[
        \sum_{m\ge j}\frac{s(m,j)}{m!}\Delta^mG.
\]
Its $m$th summand has valuation at least
\[
        qm-\vp(m!).
\]
Because $q(p-1)>1$, this quantity tends to $+\infty$ uniformly as $m\to\infty$, and also tends to $+\infty$ as $j\to\infty$ after taking the infimum over $m\ge j$.  Hence the ordinary power-series coefficients are well defined and tend to zero.

For $z,t\in\Zp$, Vandermonde's identity gives
\[
        \binom{z+t}{n}=\sum_{i=0}^n\binom zi\binom t{n-i}.
\]
Since $\Delta$ is topologically nilpotent, this identity may be substituted into the convergent binomial series, giving
\[
        (I+\Delta)^{z+t}=(I+\Delta)^z(I+\Delta)^t.
\]
Thus $\T^{z+t}=\T^z\T^t$.  For $n\in\Np$ the series is finite and the ordinary binomial theorem gives
\[
        \T^n=(I+\Delta)^n,
\]
which is the $n$-fold pullback by $f$.

It remains only to justify multiplicativity.  For fixed $G,H\in\A$, the function
\[
        z\longmapsto \T^z(GH)-\T^z(G)\T^z(H)
\]
is continuous on $\Zp$ and vanishes at every $n\in\Np$, because $\T^n$ is ordinary pullback.  Since $\Np$ is dense in $\Zp$, it vanishes identically.  Hence each $\T^z$ is an algebra endomorphism.  The identity $\T^z\T^{-z}=\T^0=I$ shows that it is an automorphism.
\end{proof}

For $a\in\Zp^d$ and $F\in\Zp[x_1,\ldots,x_d]$, define
\begin{equation}\label{eq:B-def}
        B_m=(\Delta^mF)(a)
\end{equation}
and
\begin{equation}\label{eq:H-def}
        H_F(z)=\sum_{m\ge0}B_m\binom zm=(\T^zF)(a).
\end{equation}
By \cref{lem:analytic-powers}, $H_F\in\Qp\langle z\rangle$ and
\begin{equation}\label{eq:H-interpolates}
        H_F(n)=F(f^n(a))\qquad(n\in\Np).
\end{equation}
Define the interpolated orbit by
\[
        \Gamma_a(z)=\bigl((\T^z x_1)(a),\ldots,(\T^z x_d)(a)\bigr).
\]
Since $\T^z$ is an algebra homomorphism, one has
\begin{equation}\label{eq:H-gamma}
        H_F(z)=F(\Gamma_a(z)).
\end{equation}

\begin{definition}\label{def:SI}
Let
\[
        H(z)=\sum_{j\ge0}c_jz^j\in\Qp\langle z\rangle
\]
be nonzero.  Set
\[
        \alpha(H)=\min_{j\ge0}\vp(c_j),
        \qquad
        \SI(H)=\max\{j\ge0:\vp(c_j)=\alpha(H)\}.
\]
\end{definition}

By \cref{fact:strassmann}, $H$ has at most $\SI(H)$ zeros in $\Zp$.

\begin{corollary}\label{cor:local-dml}
Let $V\subseteq\mathbb A^d_{\Zp}$ be cut out by $F_1,\ldots,F_r\in\Zp[x_1,\ldots,x_d]$.  If some interpolant $H_{F_i}$ is not identically zero, then
\[
        \{n\in\Np:f^n(a)\in V\}
\]
is finite.  If all $H_{F_i}$ are identically zero, then every iterate lies on $V$.
\end{corollary}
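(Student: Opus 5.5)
The plan is to apply Strassmann's theorem, in the form \cref{fact:strassmann} together with \cref{def:SI}, to the interpolants $H_{F_i}=F_i(\Gamma_a(z))$ of \eqref{eq:H-gamma}. The finiteness statement and the degenerate statement are handled separately.

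\emph{Finite case.} Suppose $H_{F_i}\not\equiv0$ for some $i$. By \cref{lem:analytic-powers}, $H_{F_i}$ lies in $\Qp\langle z\rangle$; in fact its coefficients lie in $\Qp$ with valuations tending to $+\infty$. We may scale it by a power of $p$ so that its coefficients lie in $\Zp$ and at least one is a unit. Scaling does not change the zero set, and by \cref{fact:strassmann} the scaled series has at most $\SI(H_{F_i})<\infty$ zeros in $\Zp$. Now take $n\in\Np$ with $f^n(a)\in V$. Then $F_i(f^n(a))=0$, and by \eqref{eq:H-interpolates} this says $H_{F_i}(n)=0$. Hence the hitting set injects into the zero set of $H_{F_i}$ in $\Zp$, so it is finite, with at most $\SI(H_{F_i})$ elements.

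\emph{Degenerate case.} Suppose every $H_{F_i}\equiv0$. Then for each $n\in\Np$ and each $i$ we have $F_i(f^n(a))=H_{F_i}(n)=0$, again by \eqref{eq:H-interpolates}. Since $V$ is cut out by $F_1,\dots,F_r$, this gives $f^n(a)\in V$ for all $n$.

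The one step needing care is that the values $f^n(a)$ are honest points of $\Zp^d$. Since $\Phi\in\A^d$ has coefficients in $\Zp$ tending to zero, evaluating $f$ at a point of $\Zp^d$ converges and gives a point of $\Zp^d$. By induction, $f^n(a)\in\Zp^d$, so evaluating $F_i$ there makes sense and agrees with $(\T^nF_i)(a)$ by \cref{lem:analytic-powers}. Beyond this there is no real obstacle: the argument is a direct combination of the interpolation identity with Strassmann's bound.
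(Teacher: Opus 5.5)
Your proposal is correct and follows essentially the same route as the paper. Strassmann's theorem applied to a nonvanishing $H_{F_i}$ gives finitely many zeros in $\Zp$, the hitting set lies inside them by \eqref{eq:H-interpolates}, and the degenerate case follows from the same identity; your extra remarks on scaling and on $f^n(a)\in\Zp^d$ just make explicit details the paper leaves implicit.
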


\begin{proof}
If $H_{F_i}\not\equiv0$, then \cref{fact:strassmann} gives finitely many zeros of $H_{F_i}$ in $\Zp$, hence finitely many $n\in\Np$ with $F_i(f^n(a))=0$.  The hitting set for $V$ is a subset of this finite set.  If all interpolants vanish identically, then all defining equations vanish at every iterate by \eqref{eq:H-interpolates}.
\end{proof}

\begin{proposition}\label{prop:finite-difference}
Let
\[
        y_i=F(f^i(a))\qquad(i\ge0).
\]
Then
\begin{equation}\label{eq:B-finite-difference}
        B_m=\sum_{i=0}^m(-1)^{m-i}\binom mi y_i.
\end{equation}
\end{proposition}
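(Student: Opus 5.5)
The plan is to expand $\Delta^m=(\T-I)^m$ by the ordinary binomial theorem and then evaluate at $a$.

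First, $\T$ and $I$ commute as operators on $\A$. The expansion $\Delta^m=\sum_{i=0}^m(-1)^{m-i}\binom mi\T^i$ is therefore a finite identity in the ring of $\Zp$-linear endomorphisms of $\A$, and it needs no convergence argument. Applying it to $F\in\Zp[x_1,\ldots,x_d]\subset\A$ gives
\[
        \Delta^mF=\sum_{i=0}^m(-1)^{m-i}\binom mi\,\T^iF .
\]

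Second, for $i\in\Np$ the operator $\T^i$ is just $i$-fold composition with $f$, so $\T^iF=F\circ f^i$; this is also recorded in \cref{lem:analytic-powers}. Evaluation at the point $a\in\Zp^d$ is a $\Zp$-linear map $\A\to\Zp$, because restricted power series converge on $\Zp^d$. Applying it to the displayed identity yields $B_m=(\Delta^mF)(a)=\sum_i(-1)^{m-i}\binom mi F(f^i(a))=\sum_i(-1)^{m-i}\binom mi y_i$, which is \eqref{eq:B-finite-difference}.

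The only point needing care is the identity $(F\circ f^i)(a)=F(f^i(a))$. Here $f^i(a)\in\Zp^d$, since $f$ maps $\Zp^d$ to itself: $\Phi$ has $\Zp$-coefficients and converges on $\Zp^d$. Composition of restricted power series is compatible with evaluation at such points, so the identity holds. Beyond this there is no real obstacle; the statement is the standard Newton forward-difference inversion.

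As a cross-check, one can instead use \eqref{eq:H-interpolates}, namely $H_F(n)=\sum_m B_m\binom nm=y_n$, and invert this triangular system by binomial inversion. That route gives the same formula.
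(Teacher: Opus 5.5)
Your proof is correct and is the paper's own argument: expand $\Delta^m=(\T-I)^m$ by the binomial theorem, use $\T^iF=F\circ f^i$, and evaluate at $a$. Your added remarks (evaluation commuting with composition, and the binomial-inversion cross-check via \eqref{eq:H-interpolates}) are valid but not needed.
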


\begin{proof}
Since $\Delta=\T-I$,
\[
        \Delta^m=(\T-I)^m=
        \sum_{i=0}^m(-1)^{m-i}\binom mi\T^i.
\]
Evaluating at $a$ gives \eqref{eq:B-finite-difference}.
\end{proof}

Using \cref{fact:mahler}, write
\[
        H_F(z)=\sum_{j\ge0}c_jz^j.
\]
Then
\begin{equation}\label{eq:cj-formula}
        c_j=\sum_{m\ge j}B_m\frac{s(m,j)}{m!}.
\end{equation}
For $M\ge0$ and $0\le j\le M$, define the truncated coefficient
\begin{equation}\label{eq:Cjm-def}
        C_j^{(M)}=\sum_{m=j}^M B_m\frac{s(m,j)}{m!}.
\end{equation}
Finally, set
\begin{equation}\label{eq:D-def}
        D=q(p-1)-1>0,
\end{equation}
and
\begin{equation}\label{eq:lambda-def}
        \lambda_{p,q}(M)=
        \left\lceil\frac{D(M+1)+1}{p-1}\right\rceil.
\end{equation}

\section{Exact finite certificates}\label{sec:certificate}

\begin{lemma}\label{lem:tail}
Let $H_F(z)=\sum c_jz^j$.  For $0\le j\le M$,
\[
        \vp(c_j-C_j^{(M)})\ge\lambda_{p,q}(M),
\]
and for $j>M$,
\[
        \vp(c_j)\ge\lambda_{p,q}(M).
\]
\end{lemma}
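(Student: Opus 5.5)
The plan is to write the difference $c_j-C_j^{(M)}$ as an explicit tail series and bound each term of that tail from below. By \eqref{eq:cj-formula} and \eqref{eq:Cjm-def}, for $0\le j\le M$,
\[
        c_j-C_j^{(M)}=\sum_{m\ge M+1}B_m\frac{s(m,j)}{m!},
\]
and for $j>M$,
\[
        c_j=\sum_{m\ge j}B_m\frac{s(m,j)}{m!}.
\]
In both cases every index in the sum satisfies $m\ge M+1\ge1$. So it suffices to show that each term with $m\ge M+1$ has valuation at least $\lambda_{p,q}(M)$. The ultrametric inequality then passes this bound to the convergent sum; convergence is already known from \cref{lem:analytic-powers}, and the bound below also shows the terms tend to $0$.

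For the termwise bound I will use three inputs.
\begin{itemize}
\item Since $F\in\Zp[x_1,\ldots,x_d]\subset\A$, the divisibility \eqref{eq:delta-divisibility} gives $\Delta^mF\in p^{qm}\A$. Evaluating at $a\in\Zp^d$ preserves integrality, so $\vp(B_m)\ge qm$.
\item The Stirling numbers $s(m,j)$ are integers, so $\vp(s(m,j))\ge0$.
\item By \cref{fact:legendre}, and since $s_p(m)\ge1$ for $m\ge1$, we have $\vp(m!)\le (m-1)/(p-1)$.
\end{itemize}
Combining these,
\[
        \vp\Bigl(B_m\frac{s(m,j)}{m!}\Bigr)\ge qm-\frac{m-1}{p-1}=\frac{(q(p-1)-1)m+1}{p-1}=\frac{Dm+1}{p-1}.
\]

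Because $D>0$, the bound $(Dm+1)/(p-1)$ is increasing in $m$. Hence for all $m\ge M+1$ each term has valuation at least $(D(M+1)+1)/(p-1)$. Valuations of nonzero elements of $\Qp$ are integers, and $\vp(0)=+\infty$, so this real lower bound can be replaced by its ceiling, which is exactly $\lambda_{p,q}(M)$. Applying the ultrametric inequality to the tail sums gives both assertions of the lemma.

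There is no serious obstacle here. The only points needing care are using the sharper Legendre bound $(m-1)/(p-1)$ rather than $m/(p-1)$, which is what produces the $+1$ in $\lambda_{p,q}$, and justifying the passage to the ceiling by integrality of valuations.
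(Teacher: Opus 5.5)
Your proposal is correct and is essentially the paper's proof. It uses the same termwise bound $\vp\bigl(B_m s(m,j)/m!\bigr)\ge qm-\vp(m!)\ge (Dm+1)/(p-1)$, obtained from $B_m\in p^{qm}\Zp$, integrality of $s(m,j)$, and Legendre's formula with $s_p(m)\ge1$, restricted to $m\ge M+1$. Your explicit appeal to integrality of valuations to pass to the ceiling $\lambda_{p,q}(M)$ fills in a step the paper leaves implicit.
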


\begin{proof}
By \eqref{eq:delta-divisibility}, $B_m\in p^{qm}\Zp$.  Since $s(m,j)\in\mathbb Z$, every term
\[
        B_m\frac{s(m,j)}{m!}
\]
has valuation at least $qm-\vp(m!)$.  By \cref{fact:legendre}, for $m\ge1$,
\[
        qm-\vp(m!)
        =\frac{(q(p-1)-1)m+s_p(m)}{p-1}
        \ge\frac{Dm+1}{p-1}.
\]
If $m\ge M+1$, this is at least $\lambda_{p,q}(M)$.  The claimed bounds follow from \eqref{eq:cj-formula} and \eqref{eq:Cjm-def}.
\end{proof}

\begin{theorem}\label{thm:certificate}
Assume $H_F\not\equiv0$.  Fix $M\ge0$ and put
\[
        \alpha_M=\min_{0\le j\le M}\vp(C_j^{(M)}).
\]
If
\begin{equation}\label{eq:certificate-condition}
        \alpha_M<\lambda_{p,q}(M),
\end{equation}
then
\begin{equation}\label{eq:certificate-output}
        \SI(H_F)=
        \max\{0\le j\le M:\vp(C_j^{(M)})=\alpha_M\}.
\end{equation}
\end{theorem}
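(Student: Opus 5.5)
The proof is a direct ultrametric comparison between the truncated coefficients $C_j^{(M)}$ and the true coefficients $c_j$ of $H_F(z)=\sum_j c_jz^j$, using \cref{lem:tail}. Write $\lambda=\lambda_{p,q}(M)$ and $\alpha=\alpha_M<\lambda$, and let $J$ be the largest index $j\le M$ with $\vp(C_j^{(M)})=\alpha$; the goal is $\SI(H_F)=J$.

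First, for $0\le j\le M$, \cref{lem:tail} gives $\vp(c_j-C_j^{(M)})\ge\lambda>\alpha$. The strict triangle inequality then sorts the indices into two kinds. If $\vp(C_j^{(M)})=\alpha$, then $\vp(c_j)=\alpha$. If $\vp(C_j^{(M)})>\alpha$, then
\[
\vp(c_j)\ge\min\{\vp(C_j^{(M)}),\lambda\}>\alpha.
\]
For $j>M$, the second part of \cref{lem:tail} gives $\vp(c_j)\ge\lambda>\alpha$.

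It follows that $\min_{j\ge0}\vp(c_j)=\alpha$, since the value $\alpha$ is attained at $j=J$ and every other index has valuation at least $\alpha$. Hence $\alpha(H_F)=\alpha_M$. Moreover, the set $\{j:\vp(c_j)=\alpha\}$ coincides exactly with $\{0\le j\le M:\vp(C_j^{(M)})=\alpha\}$, because every index $j>M$ has valuation strictly greater than $\alpha$. Taking maxima over these equal sets gives $\SI(H_F)=J$ by \cref{def:SI}. The hypothesis $H_F\not\equiv0$ is used only so that $\SI(H_F)$ is defined; the condition $\alpha_M<\lambda$ already forces some $C_j^{(M)}\neq0$.

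The only step needing care is that the strict inequality $\alpha<\lambda$ is used in both directions: to transfer equality of valuations from $C_j^{(M)}$ to $c_j$, and to exclude indices beyond $M$, whose true coefficients are not computed at all. Beyond that, the argument is a routine ultrametric bookkeeping.
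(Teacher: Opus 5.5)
Your proposal is correct and follows the paper's proof essentially line by line. Both use \cref{lem:tail} and the strict ultrametric inequality to show that, for $j\le M$, $\vp(c_j)=\alpha_M$ exactly when $\vp(C_j^{(M)})=\alpha_M$, and that every other coefficient (including all $j>M$) has valuation strictly above $\alpha_M$; your side remark that $\alpha_M<\lambda_{p,q}(M)$ already forces $H_F\not\equiv0$ is also correct.
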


\begin{proof}
Let
\[
        J_M=\max\{0\le j\le M:\vp(C_j^{(M)})=\alpha_M\}.
\]
For $j\le M$, \cref{lem:tail} gives
\[
        \vp(c_j-C_j^{(M)})\ge\lambda_{p,q}(M)>\alpha_M.
\]
Thus the coefficients $c_j$ and $C_j^{(M)}$ have the same valuation whenever either has valuation $\alpha_M$, and no coefficient with $j\le M$ has valuation below $\alpha_M$.  For $j>M$, \cref{lem:tail} gives $\vp(c_j)>\alpha_M$.  Hence the least valuation among all ordinary coefficients of $H_F$ is $\alpha_M$, and the largest index where it occurs is $J_M$.
\end{proof}

\begin{corollary}\label{cor:termination}
If $H_F\not\equiv0$, then \eqref{eq:certificate-condition} holds for all sufficiently large $M$.  Therefore the algorithm that increases $M$ until the certificate succeeds terminates and returns $\SI(H_F)$.
\end{corollary}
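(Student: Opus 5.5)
Since $H_F\not\equiv0$, some ordinary coefficient $c_j$ is nonzero, so $\alpha(H_F)=\min_j\vp(c_j)$ is a finite number, attained at finitely many indices; let $J=\SI(H_F)$. The termination corollary then follows by showing that, for large $M$, the truncated minimum $\alpha_M$ equals $\alpha(H_F)$ and lies strictly below $\lambda_{p,q}(M)$. Once this holds, \cref{thm:certificate} applies and returns $\SI(H_F)$.

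The first step is to observe that $\lambda_{p,q}(M)\to+\infty$ as $M\to\infty$. Indeed $D=q(p-1)-1>0$ by \eqref{eq:q-condition}, so $\lambda_{p,q}(M)\ge (D(M+1)+1)/(p-1)$ grows linearly in $M$. Choose $M_0$ with $M_0\ge J$ and $\lambda_{p,q}(M)>\alpha(H_F)$ for all $M\ge M_0$.

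The second step is to fix $M\ge M_0$ and compare truncated coefficients with the true ones via \cref{lem:tail}. For $0\le j\le M$ we have $\vp(c_j-C_j^{(M)})\ge\lambda_{p,q}(M)>\alpha(H_F)$. Since $\vp(c_j)\ge\alpha(H_F)$, the ultrametric inequality gives $\vp(C_j^{(M)})\ge\alpha(H_F)$ for every $j\le M$. Taking $j=J\le M$, where $\vp(c_J)=\alpha(H_F)<\vp(c_J-C_J^{(M)})$, we get $\vp(C_J^{(M)})=\alpha(H_F)$. Hence $\alpha_M=\alpha(H_F)<\lambda_{p,q}(M)$, which is exactly condition \eqref{eq:certificate-condition}.

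The third step is to invoke \cref{thm:certificate}, which then gives that the output equals $\SI(H_F)$. One can also see this directly: the same ultrametric comparison shows $\vp(C_j^{(M)})=\alpha_M$ exactly when $\vp(c_j)=\alpha(H_F)$, for $j\le M$. Consequently the algorithm that increases $M$ stops at the first $M$ satisfying the condition, at the latest at $M_0$. It is correct whenever it stops, again by \cref{thm:certificate}.

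There is no real obstacle here. The only points needing care are that $\alpha(H_F)$ is finite, which requires $H_F\not\equiv0$ together with the fact that the coefficients tend to zero by \cref{lem:analytic-powers}, and that $D>0$, which is what makes the tail bound grow without limit.
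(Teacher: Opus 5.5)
Your proposal is correct and follows essentially the same argument as the paper. You choose $M\ge J$ with $\lambda_{p,q}(M)>\alpha$, use \cref{lem:tail} and the ultrametric inequality to get $\alpha_M=\alpha$ attained at $J$, and then invoke \cref{thm:certificate}. Your added remarks that $\lambda_{p,q}$ grows without bound because $D>0$, and that $\alpha$ is finite because the coefficients tend to zero, simply make explicit what the paper's proof leaves implicit.
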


\begin{proof}
Let
\[
        \alpha=\min_j\vp(c_j),
        \qquad J=\SI(H_F).
\]
Choose $M\ge J$ with $\lambda_{p,q}(M)>\alpha$.  By \cref{lem:tail}, $C_j^{(M)}$ differs from $c_j$ by a term of valuation greater than $\alpha$ for every $j\le M$.  Thus the minimum of the valuations of $C_j^{(M)}$ is $\alpha$, attained at $j=J$, and the certificate succeeds.
\end{proof}

\begin{remark}
If the certificate does not succeed, then either $M$ is too small or $H_F\equiv0$.  In the latter case $F(f^n(a))=0$ for all $n\in\Np$, so the local DML contribution is the full progression represented by this residue class.
\end{remark}

\section{Finite precision and sharper tails}\label{sec:precision}

\begin{theorem}\label{thm:finite-precision}
Fix $M\ge0$ and suppose that the orbit values $y_0,\ldots,y_M$ are known modulo $p^R$.  Choose arbitrary lifts and compute approximations $\widetilde B_m$ and $\widetilde C_j^{(M)}$ using \eqref{eq:B-finite-difference} and \eqref{eq:Cjm-def}.  Let
\[
        V_M=\max_{0\le m\le M}\vp(m!),
        \qquad E=R-V_M,
\]
and put
\[
        \widetilde\alpha_M=
        \min_{0\le j\le M}\vp(\widetilde C_j^{(M)}).
\]
If
\begin{equation}\label{eq:finite-precision-condition}
        \widetilde\alpha_M<
        \min\{\lambda_{p,q}(M),E\},
\end{equation}
then
\[
        \SI(H_F)=
        \max\{0\le j\le M:\vp(\widetilde C_j^{(M)})=\widetilde\alpha_M\}.
\]
\end{theorem}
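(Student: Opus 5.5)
The plan is to reduce to \cref{thm:certificate} by showing that $\widetilde C_j^{(M)}$ is close enough to the true $c_j$ that the same valuation analysis applies. Concretely, I will show that for every $0\le j\le M$,
\[
        \vp\bigl(c_j-\widetilde C_j^{(M)}\bigr)\ge \min\{\lambda_{p,q}(M),E\},
\]
and that $\vp(c_j)\ge\lambda_{p,q}(M)$ for $j>M$. Once this is in hand, the argument in the proof of \cref{thm:certificate} goes through verbatim, with the threshold $\min\{\lambda_{p,q}(M),E\}$ in place of $\lambda_{p,q}(M)$.

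The first step is the numerical error. The lifts satisfy $\widetilde y_i=y_i+p^R\varepsilon_i$ with $\varepsilon_i\in\Zp$. By \eqref{eq:B-finite-difference}, which has integer coefficients, this gives $\widetilde B_m-B_m\in p^R\Zp$ for every $m\le M$. Next, $s(m,j)\in\mathbb Z$ and $\vp(1/m!)=-\vp(m!)\ge -V_M$ for $m\le M$. Hence each term of $\widetilde C_j^{(M)}-C_j^{(M)}=\sum_{m=j}^M(\widetilde B_m-B_m)s(m,j)/m!$ has valuation at least $R-V_M=E$. The ultrametric inequality then yields $\vp(\widetilde C_j^{(M)}-C_j^{(M)})\ge E$.

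The second step is the analytic error. \Cref{lem:tail} gives $\vp(c_j-C_j^{(M)})\ge\lambda_{p,q}(M)$ for $j\le M$ and $\vp(c_j)\ge\lambda_{p,q}(M)$ for $j>M$. Combining the two steps by the ultrametric inequality gives the displayed bound.

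Finally, put $\beta=\min\{\lambda_{p,q}(M),E\}>\widetilde\alpha_M$ and compare coefficients.
\begin{itemize}
\item For $j\le M$, $c_j$ and $\widetilde C_j^{(M)}$ differ by an element of valuation at least $\beta>\widetilde\alpha_M$. So $\vp(c_j)\ge\widetilde\alpha_M$, with equality exactly when $\vp(\widetilde C_j^{(M)})=\widetilde\alpha_M$.
\item For $j>M$, $\vp(c_j)\ge\lambda_{p,q}(M)\ge\beta>\widetilde\alpha_M$.
\end{itemize}
Hence $\alpha(H_F)=\widetilde\alpha_M$, and the largest index attaining it is the stated maximum. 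The minimum is attained at some $j\le M$, so $H_F\not\equiv0$ follows automatically and need not be assumed.

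I expect no real obstacle. The only point needing care is the denominator bound: $\vp(m!)$ is nondecreasing in $m$, so using $V_M$ uniformly over all $m\le M$ is legitimate.
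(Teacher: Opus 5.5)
Your proof is correct and follows the paper's argument. Both bound the numerical error $\widetilde C_j^{(M)}-C_j^{(M)}$ by $p^E$ using the integrality of the finite-difference formula and the factorial denominators, then combine this with the analytic tail of \cref{lem:tail}. The only cosmetic difference is that you compare $\widetilde C_j^{(M)}$ directly with $c_j$ and rerun the valuation argument, whereas the paper first identifies the minimum and maximizing index for the exact $C_j^{(M)}$ and then invokes \cref{thm:certificate}.
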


\begin{proof}
The finite-difference formula uses only integral binomial coefficients, so $\widetilde B_m-B_m\in p^R\Zp$ for $m\le M$.  In forming $C_j^{(M)}$, the largest possible loss of precision comes from division by factorials of integers at most $M$.  Hence
\[
        \widetilde C_j^{(M)}-C_j^{(M)}\in p^{R-V_M}\Zp=p^E\Zp.
\]
If \eqref{eq:finite-precision-condition} holds, then the observed minimum is below the numerical error and below the analytic tail.  Therefore it is the true minimum for the exact truncated coefficients, and \cref{thm:certificate} applies.
\end{proof}

\begin{center}
\fbox{%
\begin{minipage}{0.92\textwidth}
\textbf{Certified Strassmann-index algorithm.}
Given $p,q,f,F,a,M$ and $p$-adic precision $R$:
\begin{enumerate}[label=\textup{\arabic*.},leftmargin=2.2em]
\item Compute $y_i=F(f^i(a))$ for $0\le i\le M$ modulo $p^R$.
\item Compute the finite differences $\widetilde B_m$ from \eqref{eq:B-finite-difference}.
\item Compute the truncated ordinary coefficients $\widetilde C_j^{(M)}$ from \eqref{eq:Cjm-def}.
\item Set $V_M=\max_{0\le m\le M}\vp(m!)$ and
\[
        \widetilde\alpha_M=\min_{0\le j\le M}\vp(\widetilde C_j^{(M)}).
\]
\item If
\[
        \widetilde\alpha_M<\min\{\lambda_{p,q}(M),R-V_M\},
\]
return
\[
        \max\{0\le j\le M:\vp(\widetilde C_j^{(M)})=\widetilde\alpha_M\}.
\]
Otherwise return \emph{inconclusive} and increase $M$ or $R$.
\end{enumerate}
The correctness of the successful output is exactly \cref{thm:finite-precision}; inconclusive output means that the chosen truncation or precision was insufficient, or that the interpolant may be identically zero.  A separate zero-detection step is needed to distinguish these possibilities.
\end{minipage}}
\end{center}

\begin{proposition}\label{prop:sharp-tail}
Fix $M\ge0$.  Suppose we are given numbers
\[
        L_0,\ldots,L_M,L_\infty
\]
satisfying
\[
        \vp(c_j-C_j^{(M)})\ge L_j\quad(0\le j\le M),
        \qquad
        \vp(c_j)\ge L_\infty\quad(j>M).
\]
If
\[
        \alpha_M<\min\{L_0,\ldots,L_M,L_\infty\},
\]
then
\[
        \SI(H_F)=
        \max\{0\le j\le M:\vp(C_j^{(M)})=\alpha_M\}.
\]
A valid choice is
\[
        L_j=\inf_{m\ge M+1}
        \left(qm+\vp(s(m,j))-\vp(m!)\right),
        \qquad 0\le j\le M,
\]
and $L_\infty=\lambda_{p,q}(M)$, with $\vp(0)=+\infty$.
\end{proposition}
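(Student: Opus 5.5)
The argument has two parts. The first is the abstract certificate under the hypotheses $L_0,\ldots,L_M,L_\infty$; it copies the proof of \cref{thm:certificate}. The second checks that the explicit choices of $L_j$ and $L_\infty$ are valid.

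For the first part, put $L=\min\{L_0,\ldots,L_M,L_\infty\}>\alpha_M$ and $J_M=\max\{0\le j\le M:\vp(C_j^{(M)})=\alpha_M\}$. For $j\le M$ we have $\vp(c_j-C_j^{(M)})\ge L_j>\alpha_M$. The ultrametric inequality then gives three facts:
\begin{itemize}
\item if $\vp(C_j^{(M)})>\alpha_M$, then $\vp(c_j)>\alpha_M$;
\item if $\vp(C_j^{(M)})=\alpha_M$, then $\vp(c_j)=\alpha_M$;
\item no $c_j$ with $j\le M$ has valuation below $\alpha_M$.
\end{itemize}
For $j>M$ we have $\vp(c_j)\ge L_\infty>\alpha_M$. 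So $\min_j\vp(c_j)=\alpha_M$, and the largest index attaining it is $J_M$, which is $\SI(H_F)$ by \cref{def:SI}. Note that $\alpha_M<L$ is finite, so some $C_j^{(M)}$ is nonzero. Hence $H_F\not\equiv0$ follows automatically and need not be assumed.

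For the second part, $L_\infty=\lambda_{p,q}(M)$ is exactly the second assertion of \cref{lem:tail}, so there is nothing new to prove. For $j\le M$, subtract \eqref{eq:Cjm-def} from \eqref{eq:cj-formula}:
\[
        c_j-C_j^{(M)}=\sum_{m\ge M+1}B_m\frac{s(m,j)}{m!}.
\]
The series converges, since its terms have valuation at least $(Dm+1)/(p-1)\to\infty$, as in \cref{lem:tail}. By \eqref{eq:delta-divisibility}, $\vp(B_m)\ge qm$. So the $m$th term has valuation at least $qm+\vp(s(m,j))-\vp(m!)$, which is $+\infty$ when $s(m,j)=0$. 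The ultrametric inequality, which passes to convergent series by continuity of $\vp$, then gives $\vp(c_j-C_j^{(M)})\ge\inf_{m\ge M+1}\bigl(qm+\vp(s(m,j))-\vp(m!)\bigr)=L_j$.

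The only point needing care is that this infimum is a legitimate number, finite or $+\infty$, and is at least $\lambda_{p,q}(M)$. Since $\vp(s(m,j))\ge0$, \cref{lem:tail} bounds each term below by $\lambda_{p,q}(M)$, so $L_j$ is a refinement and never weaker. No step is a real obstacle; the proposition is a bookkeeping sharpening of \cref{thm:certificate} that keeps the Stirling-number valuations instead of discarding them.
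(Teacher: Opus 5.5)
Your proposal is correct and follows the paper's own proof. You rerun the argument of \cref{thm:certificate} with $L_0,\ldots,L_M,L_\infty$ in place of the uniform tail, justify $L_j$ from $c_j-C_j^{(M)}=\sum_{m\ge M+1}B_m\,s(m,j)/m!$ with $B_m\in p^{qm}\Zp$, and take $L_\infty$ from \cref{lem:tail}; your side remarks that $\alpha_M<L$ forces $\alpha_M$ finite (hence $H_F\not\equiv0$) and that $L_j\ge\lambda_{p,q}(M)$ are correct details the paper leaves implicit.
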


\begin{proof}
The proof of \cref{thm:certificate} uses only lower bounds for the tails.  Replacing the uniform tail bound by the displayed bounds gives the first assertion.  The stated choice of $L_j$ follows directly from
\[
        c_j-C_j^{(M)}=
        \sum_{m\ge M+1}B_m\frac{s(m,j)}{m!},
        \qquad B_m\in p^{qm}\Zp.
\]
For $j>M$, \cref{lem:tail} supplies $L_\infty$.
\end{proof}

\section{Adaptive residue-class certificates}\label{sec:zooming}

The certificate in \cref{sec:certificate,sec:precision} treats the whole time
disc $\Zp$ at once.  In computations, and also in local DML arguments, it is
useful to zoom in on a congruence class of times.  This has an intrinsic
dynamical meaning: on the class $b+p^h\Zp$ one starts at $f^b(a)$ and iterates
$f^{p^h}$.  The key point is that this iterate is closer to the identity than
$f$ itself.

\begin{lemma}\label{lem:iterate-gains-precision}
For every integer $h\ge0$ there exists $\Phi_h\in\A^d$ such that
\[
        f^{p^h}(x)=x+p^{q+h}\Phi_h(x).
\]
\end{lemma}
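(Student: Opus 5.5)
The plan is to induct on $h$, with one inductive step: raising a map that is congruent to the identity modulo $p^k$ to the $p$th power gains one extra power of $p$. For $h=0$ take $\Phi_0=\Phi$. Suppose $g:=f^{p^h}$ satisfies $g(x)=x+p^k\Psi(x)$ with $\Psi\in\A^d$ and $k=q+h\ge1$. Then $f^{p^{h+1}}=g^p$, so it suffices to show $g^p(x)=x+p^{k+1}\Psi'(x)$ for some $\Psi'\in\A^d$.

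First, the divisibility \eqref{eq:delta-divisibility} transfers to $g$. Let $\T_g(G)=G\circ g$ and $\Delta_g=\T_g-I$ on $\A$. Composition $G\circ g$ is well defined in $\A$ because $g$ has coefficients in $\Zp$ and maps the closed unit polydisc to itself. For $G\in\A$, expand $G(x+p^k\Psi(x))$ formally in powers of $p^k\Psi$. Every monomial $x^\beta$ satisfies $(x+p^k\Psi)^\beta-x^\beta\in p^k\A$, and the coefficients of $G$ tend to $0$, so $G\circ g-G\in p^k\A$. Hence $\Delta_g(\A)\subseteq p^k\A$, and by iteration $\Delta_g^m(\A)\subseteq p^{km}\A$. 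This is the same argument that gives \eqref{eq:delta-divisibility} for $f$.

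Next apply the binomial theorem to the coordinate functions. For each $i$,
\[
 x_i\circ g^p=\T_g^p x_i=(I+\Delta_g)^p x_i=x_i+\sum_{m=1}^{p-1}\binom pm\Delta_g^m x_i+\Delta_g^p x_i .
\]
For $1\le m\le p-1$, the binomial coefficient $\binom pm$ is divisible by $p$, so the $m$th term lies in $p^{1+km}\A\subseteq p^{k+1}\A$. The last term lies in $p^{kp}\A$, and $kp\ge k+1$ because $k\ge1$ and $p\ge2$. Therefore $x_i\circ g^p-x_i\in p^{k+1}\A$, and dividing by $p^{k+1}$ defines the coordinates of $\Psi'\in\A^d$. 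This completes the induction. The hypothesis $q(p-1)>1$ is not needed; only $q\ge1$ is used.

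The only step needing genuine care is the congruence $G\circ g\equiv G\pmod{p^k}$ for an arbitrary restricted power series $G$, rather than a polynomial. I expect to handle it by monomial-wise estimates together with the closedness of $p^k\A$ in $\A$. Everything else is the elementary divisibility of $\binom pm$.
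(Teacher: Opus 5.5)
Your proof is correct, but it is organized differently from the paper's. The paper does not induct. It expands $(\T^{p^h}-I)x_i=\sum_{m=1}^{p^h}\binom{p^h}{m}\Delta^m x_i$ in a single step, using only the operator $\Delta=\T-I$ attached to $f$ itself. It bounds the $m$th term by $\vp\binom{p^h}{m}\ge h-\vp(m)$, which comes from $\binom{p^h}{m}=\frac{p^h}{m}\binom{p^h-1}{m-1}$, and then by $\vp(m)\le m-1$. This gives valuation at least $q+h+\bigl(q(m-1)-\vp(m)\bigr)\ge q+h$.

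Your induction replaces that binomial-valuation estimate with the more elementary fact $p\mid\binom pm$ for $0<m<p$. The price is that you must re-establish $\Delta_g^m(\A)\subseteq p^{km}\A$ for each intermediate iterate $g=f^{p^h}$. You do this correctly via the monomial estimate and the closedness of $p^k\A$, a point the paper simply takes for granted even for $f$.

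In return, your inductive step proves a reusable statement: any $g\in\A^d$ with $g\equiv\id\pmod{p^k}$ and $k\ge1$ satisfies $g^p\equiv\id\pmod{p^{k+1}}$, whether or not $g$ is an iterate of $f$. The paper's route avoids iterating the divisibility argument but needs the sharper estimate on $\binom{p^h}{m}$. As you note, both arguments use only $q\ge1$, not $q(p-1)>1$.
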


\begin{proof}
Let $x_i$ be a coordinate function.  Since the pullback by $f$ is $\T=I+\Delta$,
\[
        (f^{p^h})_i-x_i=(\T^{p^h}-I)x_i
        =\sum_{m=1}^{p^h}\binom {p^h}m\Delta^m x_i.
\]
The term with index $m$ lies in
\[
        p^{qm+\vp\binom {p^h}m}\A.
\]
Using
\[
        \binom {p^h}m=\frac{p^h}{m}\binom{p^h-1}{m-1},
\]
we have $\vp\binom {p^h}m\ge h-\vp(m)$.  Therefore the $m$th term has valuation
at least
\[
        qm+h-\vp(m)=q+h+\bigl(q(m-1)-\vp(m)\bigr).
\]
Since $q\ge1$ and $\vp(m)\le m-1$, this is at least $q+h$.  Hence each coordinate
of $f^{p^h}(x)-x$ lies in $p^{q+h}\A$.
\end{proof}

Fix $h\ge0$ and, if $h>0$, a representative $0\le b<p^h$; for $h=0$ take
$b=0$.  Put
\[
        f_h=f^{p^h},
        \qquad a_b=f^b(a),
\]
and define the zoomed interpolant
\begin{equation}\label{eq:zoomed-interpolant}
        H_{F;b,h}(u)=F(f_h^u(a_b)).
\end{equation}

\begin{proposition}\label{prop:zoomed-interpolant}
For all $u\in\Zp$,
\begin{equation}\label{eq:zoomed-identity}
        H_{F;b,h}(u)=H_F(b+p^h u).
\end{equation}
Moreover, if
\[
        y_i^{(b,h)}=F(f^{b+p^h i}(a))\qquad(i\ge0),
\]
then the Mahler coefficients of $H_{F;b,h}$ are the finite differences
\[
        B_m^{(b,h)}=
        \sum_{i=0}^m(-1)^{m-i}\binom mi y_i^{(b,h)}.
\]
Consequently, whenever $H_{F;b,h}\not\equiv0$, the exact, finite-precision, and
refined-tail certificates of \cref{sec:certificate,sec:precision} apply to
$H_{F;b,h}$ with $q$ replaced by $q+h$.
\end{proposition}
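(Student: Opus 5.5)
The plan is to define $f_h^u$ through the operator calculus of \cref{lem:analytic-powers}, applied to the map $f_h$. By \cref{lem:iterate-gains-precision}, $f_h(x)=x+p^{q+h}\Phi_h(x)$. Since $q+h\ge q$, we have $(q+h)(p-1)>1$, so the whole setup of \cref{sec:setup} applies to $f_h$ with $q$ replaced by $q+h$. The composition operator of $f_h$ is $\T^{p^h}$. Writing $\Delta_h=\T^{p^h}-I$, the lemma gives analytic powers $(I+\Delta_h)^u$ for $u\in\Zp$. We read $F(f_h^u(a_b))$ as $((I+\Delta_h)^uF)(a_b)$, exactly as $H_F$ was defined in \eqref{eq:H-def}. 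Evaluation at $a_b=f^b(a)$ is evaluation at $a$ after applying $\T^b$, so
\[
        H_{F;b,h}(u)=\bigl(\T^b(I+\Delta_h)^uF\bigr)(a).
\]

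To prove \eqref{eq:zoomed-identity}, I will compare two continuous functions of $u\in\Zp$: $\T^b(I+\Delta_h)^uF$ and $\T^{b+p^hu}F$, with values in $\A$. For $u=n\in\Np$ they agree. Indeed, $(I+\Delta_h)^n=\T^{p^hn}$ by the binomial theorem, and then $\T^b\T^{p^hn}=\T^{b+p^hn}$ by the group law in \cref{lem:analytic-powers}. Both sides are continuous in $u$. The left side is a Mahler series whose terms $\binom un\Delta_h^nF$ tend to zero uniformly in $u$. The right side is the composite of the continuous map $u\mapsto b+p^hu$ with the continuous map $z\mapsto\T^zF$; the latter is even an element of $\A\langle z\rangle$. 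Since $\Np$ is dense in $\Zp$, the two functions agree everywhere. Evaluating at $a$ gives $H_{F;b,h}(u)=H_F(b+p^hu)$. The only point needing care is to check that "$f_h^u$" in \eqref{eq:zoomed-interpolant} means precisely this operator-calculus interpolation. This is a matter of convention, since we define it via \cref{lem:analytic-powers} for $f_h$. I expect this bookkeeping, rather than any estimate, to be the main obstacle.

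For the Mahler coefficients, I will apply \cref{prop:finite-difference} to the pair $(f_h,a_b)$. Its Mahler coefficients are $(\Delta_h^mF)(a_b)=\sum_i(-1)^{m-i}\binom mi F(f_h^i(a_b))$. Here $f_h^i(a_b)=f^{b+p^hi}(a)$, which is $y_i^{(b,h)}$. Finally, \cref{lem:tail}, \cref{thm:certificate}, \cref{thm:finite-precision} and \cref{prop:sharp-tail} used only three inputs: the form $f=x+p^q\Phi$ with $q(p-1)>1$, the divisibility \eqref{eq:delta-divisibility}, and the finite-difference formula. All three hold for $(f_h,a_b)$ with $q+h$ in place of $q$. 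Hence the certificates transfer verbatim with $\lambda_{p,q+h}$, provided $H_{F;b,h}\not\equiv0$.
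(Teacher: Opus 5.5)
Your proof is correct and follows the same route as the paper. You check agreement at $u=n\in\Np$ via $f_h^n(a_b)=f^{b+p^hn}(a)$ and conclude by continuity and density of $\Np$ in $\Zp$; you then use \cref{prop:finite-difference} for the system $(f_h,a_b)$ and \cref{lem:iterate-gains-precision} to pass from $q$ to $q+h$. Your version is somewhat more careful than the paper's, because you define $f_h^u$ explicitly through the operator calculus of \cref{lem:analytic-powers} for $f_h$ and run the density argument on $\A$-valued functions before evaluating at $a$.
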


\begin{proof}
For every $n\in\Np$,
\[
        H_{F;b,h}(n)=F(f_h^n(a_b))=F(f^{b+p^h n}(a))=H_F(b+p^h n).
\]
Both sides are analytic functions of $u\in\Zp$, and $\Np$ is dense in $\Zp$; hence
\eqref{eq:zoomed-identity} follows.  The finite-difference formula is
\cref{prop:finite-difference} applied to the local system $(f_h,a_b)$.  By
\cref{lem:iterate-gains-precision}, this local system satisfies the standing
hypothesis with $q+h$ in place of $q$, so the certificates apply with the tail
function $\lambda_{p,q+h}$.
\end{proof}

\begin{theorem}\label{thm:residue-class-certificate}
Fix $h\ge0$ and representatives $b$ modulo $p^h$.  For each $b$, let
\[
        S_{b,h}=\{n\in\Np:n\equiv b\pmod {p^h},\ F(f^n(a))=0\}.
\]
If $H_{F;b,h}\not\equiv0$, then
\[
        \#S_{b,h}\le \SI(H_{F;b,h}),
\]
and this index is certifiable from finitely many values
$F(f^{b+p^h i}(a))$.  If $H_{F;b,h}\equiv0$, then the whole progression
$b+p^h\Np$ lies in the local zero set.  In particular, if no class is
identically zero, then
\[
        \#\{n\in\Np:F(f^n(a))=0\}
        \le\sum_{b\bmod p^h}\SI(H_{F;b,h}).
\]
\end{theorem}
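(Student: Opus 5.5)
The plan is to reduce everything to \cref{prop:zoomed-interpolant} together with Strassmann's theorem. First I will set up the bijection $u\mapsto b+p^h u$ from $\Np$ onto the progression $b+p^h\Np$. For $u\in\Np$, \eqref{eq:zoomed-interpolant} gives $H_{F;b,h}(u)=F(f^{b+p^hu}(a))$, so
\[
        S_{b,h}=\{\,b+p^h u:\ u\in\Np,\ H_{F;b,h}(u)=0\,\}.
\]
Thus $\#S_{b,h}$ equals the number of zeros of $H_{F;b,h}$ lying in $\Np\subseteq\Zp$. If $H_{F;b,h}\not\equiv0$, it is a nonzero element of $\Qp\langle u\rangle$. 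This follows either from \cref{lem:analytic-powers} applied to the local system $(f_h,a_b)$, which satisfies the standing hypothesis with $q+h$ by \cref{lem:iterate-gains-precision}, or directly from \eqref{eq:zoomed-identity}, since composing $H_F$ with the affine map $u\mapsto b+p^hu$ preserves restricted power series. \cref{fact:strassmann} then gives $\#S_{b,h}\le\SI(H_{F;b,h})$.

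Certifiability comes from \cref{prop:zoomed-interpolant}. The Mahler coefficients $B_m^{(b,h)}$ are finite differences of the values $y_i^{(b,h)}=F(f^{b+p^hi}(a))$, and \cref{thm:certificate} together with \cref{cor:termination}, with $\lambda_{p,q+h}$ in place of $\lambda_{p,q}$, shows that for large $M$ the index is read off from $y_0^{(b,h)},\dots,y_M^{(b,h)}$. Here one must note that $q+h\ge1$ and $(q+h)(p-1)>1$, which is immediate.

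If $H_{F;b,h}\equiv0$, then evaluating at $u=n\in\Np$ gives $F(f^{b+p^hn}(a))=0$ for all $n$. Hence the progression $b+p^h\Np$ lies in the zero set.

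Finally, the sets $b+p^h\Np$, for $0\le b<p^h$, partition $\Np$. So $\{n:F(f^n(a))=0\}=\bigsqcup_b S_{b,h}$, and summing the per-class bounds gives the stated total inequality. The only point needing care is the nonvanishing and analyticity of $H_{F;b,h}$ as an element of the Tate algebra, so that Strassmann's theorem applies. I expect this to be handled by the identity \eqref{eq:zoomed-identity} rather than to be a real obstacle.
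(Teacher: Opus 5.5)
Your proposal is correct and follows the paper's route. The bijection $u\mapsto b+p^hu$, combined with the zoomed identity, identifies $S_{b,h}$ with the zeros of $H_{F;b,h}$ in $\Np$; Strassmann's theorem then gives the bound, and evaluation handles the identically-zero case. Your extra remarks make explicit what the paper leaves to \cref{prop:zoomed-interpolant} and to the reader: certifiability via \cref{thm:certificate} and \cref{cor:termination} with tail $\lambda_{p,q+h}$, and the partition of $\Np$ into the classes $b+p^h\Np$ that gives the summed bound.
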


\begin{proof}
The congruence class $n\equiv b\pmod {p^h}$ is represented uniquely as
$n=b+p^h i$ with $i\in\Np$.  By \eqref{eq:zoomed-identity}, zeros in this class
are exactly ordinary zeros of $H_{F;b,h}$ at nonnegative integer arguments.
If the zoomed interpolant is nonzero, Strassmann's theorem gives the stated
bound.  If it is identically zero, then $F(f^{b+p^h i}(a))=0$ for every
$i\in\Np$.
\end{proof}

\begin{center}
\fbox{%
\begin{minipage}{0.92\textwidth}
\textbf{Adaptive zooming strategy.}
Start with the single class $(b,h)=(0,0)$.
For a class $(b,h)$, compute the certificate for $H_{F;b,h}$ using the tail
$\lambda_{p,q+h}$.  If it succeeds, store the certified local bound.  If the
interpolant is proved to vanish identically, store the full progression
$b+p^h\Np$.  If neither conclusion is available, split the class into the $p$
children
\[
        b+kp^h+p^{h+1}\Zp\qquad(0\le k<p),
\]
and repeat.  The gain from splitting is twofold: the analytic tail improves
from $q$ to $q+h+1$, and many child classes may have no zeros or much smaller
Strassmann index.
\end{minipage}}
\end{center}

\begin{remark}
Residue-class zooming does not change the underlying orbit; it changes the
coordinate on the time disc.  Thus it is compatible with the global DML
procedure of passing to residue classes.  The theorem above says that one can
continue this passage inside the local analytic problem itself, with a
quantitative gain in the certificate estimates at every $p$-power refinement.
\end{remark}

\section{Arc ideals and multiequation certificates}\label{sec:arc-ideals}

The preceding sections certify a single equation $F(\Gamma_a(z))=0$.  If a
target variety is defined by several equations, applying Strassmann to one
chosen equation can be wasteful.  The natural one-variable object is instead
the ideal cut out by all equations along the analytic arc.

Let $V\subseteq\mathbb A^d_{\Zp}$ be cut out by an ideal
$I(V)\subseteq\Zp[x_1,\ldots,x_d]$.  Define the arc ideal
\begin{equation}\label{eq:arc-ideal}
        I_{\Gamma_a}(V)=
        \left\langle F(\Gamma_a(z)):F\in I(V)\right\rangle
        \subseteq \Qp\langle z\rangle.
\end{equation}
If this ideal is nonzero, \cref{fact:tate-principal} gives a generator, unique
up to a unit.  We denote any such generator by $G_{\Gamma_a,V}$ and call it the
arc-gcd of $V$ along the interpolated orbit.

\begin{proposition}\label{prop:arc-gcd-certificate}
With notation as above, exactly one of the following alternatives holds.
\begin{enumerate}[label=\textup{(\alph*)}]
\item $I_{\Gamma_a}(V)=(0)$.  Then $\Gamma_a(\Zp)\subseteq V$, and in particular
$f^n(a)\in V$ for every $n\in\Np$.
\item $I_{\Gamma_a}(V)\ne(0)$.  Then, for every $t\in\Zp$,
\[
        \Gamma_a(t)\in V
        \quad\Longleftrightarrow\quad
        G_{\Gamma_a,V}(t)=0.
\]
Consequently
\[
        \#\{n\in\Np:f^n(a)\in V\}
        \le \SI(G_{\Gamma_a,V}).
\]
Moreover, if $H_F=F(\Gamma_a(z))$ is any nonzero element of
$I_{\Gamma_a}(V)$, then
\[
        \SI(G_{\Gamma_a,V})\le \SI(H_F).
\]
\end{enumerate}
\end{proposition}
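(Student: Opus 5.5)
The plan is to prove the two alternatives separately; they are clearly mutually exclusive and exhaustive.

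For (a), if $I_{\Gamma_a}(V)=(0)$ then every generator $F(\Gamma_a(z))$ with $F\in I(V)$ is the zero element of $\Qp\langle z\rangle$. Hence $F(\Gamma_a(t))=0$ for all $t\in\Zp$ and all $F\in I(V)$, which is to say $\Gamma_a(t)\in V$. Specializing to $t=n\in\Np$ and using \eqref{eq:H-interpolates} gives $f^n(a)\in V$.

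For (b), first use Hilbert's basis theorem to write $I(V)=(F_1,\dots,F_r)$. Every element $F=\sum A_iF_i$ then satisfies $F(\Gamma_a(z))=\sum A_i(\Gamma_a(z))F_i(\Gamma_a(z))$. Here $A_i(\Gamma_a(z))\in\Qp\langle z\rangle$, because $\T^z$ is an algebra homomorphism (\cref{lem:analytic-powers}), and this gives the identity \eqref{eq:H-gamma} for $A_i$ as well. So the arc ideal is generated by $h_i=F_i(\Gamma_a(z))$.

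By \cref{fact:tate-principal}, $G=G_{\Gamma_a,V}$ generates this ideal. Thus each $h_i=u_iG$ for some $u_i\in\Qp\langle z\rangle$, and $G=\sum w_ih_i$ for some $w_i\in\Qp\langle z\rangle$. Evaluating at $t\in\Zp$, where elements of $\Qp\langle z\rangle$ converge, gives two implications. If $G(t)=0$ then every $h_i(t)=0$, so $\Gamma_a(t)\in V$. Conversely, if $\Gamma_a(t)\in V$ then every $h_i(t)=0$, so $G(t)=\sum w_i(t)h_i(t)=0$.

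Next, $G\neq0$ because the ideal is nonzero. By \cref{fact:strassmann}, $G$ has at most $\SI(G)$ zeros in $\Zp$. The hitting times $n$ map injectively into the zero set of $G$ via $n\mapsto t=n$, using $\Gamma_a(n)=f^n(a)$. That $\Gamma_a(n)=f^n(a)$ follows from $\T^nx_i=x_i\circ f^n$, so this gives the counting bound.

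For the final inequality, take $H_F\neq0$ in the ideal and write $H_F=uG$ with $u\in\Qp\langle z\rangle$, where $u\neq0$. I would then use multiplicativity of the Weierstrass degree, $\wdeg(uG)=\wdeg(u)+\wdeg(G)$. This follows from Gauss's lemma for the Gauss norm on $\Qp\langle z\rangle$: scale both factors to be primitive, reduce mod $p$, and observe that the reductions are nonzero polynomials in $\mathbb F_p[z]$ whose degrees add. By \cref{fact:weierstrass}, $\SI=\wdeg$, so $\SI(H_F)=\SI(u)+\SI(G)\ge\SI(G)$.

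The only step needing care is this multiplicativity: it requires identifying $\SI$ with the degree of the reduction of a primitive scaling, and that identification is immediate from \cref{def:SI}.

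An alternative route uses the zero-counting interpretation over $\Cp$. Every zero of $G$ in the closed disc, counted with multiplicity, is a zero of $H_F=uG$ with at least that multiplicity, so \cref{fact:weierstrass} again gives the bound. I would present the Gauss-lemma argument as the primary one.
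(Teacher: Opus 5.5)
Your proposal is correct. For alternative (a), for the equivalence $\Gamma_a(t)\in V\iff G_{\Gamma_a,V}(t)=0$, and for the counting bound, it follows the paper's proof. Your Hilbert-basis step only makes explicit that $G_{\Gamma_a,V}$ is a finite $\Qp\langle z\rangle$-combination of pulled-back equations. The paper uses this tacitly, and it holds for any element of an ideal generated by a set anyway.

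The genuine difference is the final inequality $\SI(G_{\Gamma_a,V})\le\SI(H_F)$. The paper writes $H_F=G_{\Gamma_a,V}Q$ and invokes the Weierstrass interpretation of \cref{fact:weierstrass}: $\SI$ counts zeros in the closed unit disc over $\Cp$, and zero divisors add under products. You instead prove $\SI(uG)=\SI(u)+\SI(G)$ directly by Gauss's lemma:
\begin{itemize}
\item after scaling to primitive series, reduction modulo $p$ is a ring homomorphism $\Zp\langle z\rangle\to\mathbb F_p[z]$;
\item the index of a primitive series is the degree of its reduction;
\item degrees add in the domain $\mathbb F_p[z]$.
\end{itemize}

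Your route is self-contained and elementary. It needs only \cref{def:SI}, with no Weierstrass preparation and no passage to $\Cp$. It also gives exact additivity, which incidentally shows that $\SI(G_{\Gamma_a,V})$ does not depend on the choice of generator, since units of $\Qp\langle z\rangle$ have index $0$. The paper's route is shorter given its black box and matches the root-count viewpoint of \cref{sec:one-dimensional}. Your secondary, zero-counting argument is essentially the paper's.
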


\begin{proof}
If the arc ideal is zero, every polynomial in $I(V)$ vanishes on $\Gamma_a(z)$;
therefore the analytic arc, and hence every ordinary iterate, lies on $V$.
Assume the ideal is nonzero and generated by $G_{\Gamma_a,V}$.  If
$\Gamma_a(t)\in V$, then every element of the arc ideal vanishes at $t$, so in
particular $G_{\Gamma_a,V}(t)=0$.  Conversely, if $G_{\Gamma_a,V}(t)=0$, then
every element of the arc ideal is a multiple of $G_{\Gamma_a,V}$ and therefore
vanishes at $t$; hence all equations in $I(V)$ vanish at $\Gamma_a(t)$.
Strassmann's theorem gives the zero bound on ordinary times.

Finally, if $H_F\in I_{\Gamma_a}(V)$ is nonzero, then
$H_F=G_{\Gamma_a,V}Q$ for some $Q\in\Qp\langle z\rangle$.  Under the
Weierstrass interpretation of \cref{fact:weierstrass}, the degree of the zero
divisor of a product is the sum of the degrees of the zero divisors of the
factors.  Hence the Strassmann index of the common divisor cannot exceed that
of $H_F$.
\end{proof}

\begin{remark}\label{rem:arc-gcd-sharpness}
The gain can be strict.  For example, if two defining equations restrict to
$H_1(z)=z(z-1)$ and $H_2(z)=z(z-2)$ along the same arc, then each single
equation gives a Strassmann bound $2$, while the arc-gcd is $z$ and the common
zero bound is $1$.  Thus the arc-gcd records the intersection with the target
rather than the zero set of an auxiliary hypersurface.
\end{remark}

The arc-gcd is an exact object.  The result above is therefore a correctness
theorem: once a generator, or a certified gcd of the pulled-back equations, has
been obtained, its Strassmann index gives the common-zero bound.  The present
paper does not construct a full finite-precision Weierstrass-gcd algorithm.  In
computation one may approximate the generator by a finite-precision
Weierstrass gcd of a finite set of generators of $I(V)$ and then apply the
coefficient certificate to the resulting one-variable function.  The
finite-precision theorem remains the verification step: a proposed gcd is useful
only after its Strassmann index is certified below both the analytic tail and the
numerical error.

A simpler but often effective screening device is to project a finite target
set to one coordinate or to a generic linear form.

\begin{proposition}\label{prop:projection-certificate}
Let $B\subset\Zp^d$ be finite and let
\[
        L(x)=u_1x_1+\cdots+u_dx_d,
        \qquad u_i\in\Zp,
\]
be an integral linear form.  Put
\[
        P_{B,L}(x)=\prod_{\beta\in B}\bigl(L(x)-L(\beta)\bigr)
        \in\Zp[x_1,\ldots,x_d]
\]
and
\[
        H_{B,L}(z)=P_{B,L}(\Gamma_a(z)).
\]
If $H_{B,L}\not\equiv0$, then
\[
        \#\{n\in\Np:f^n(a)\in B\}
        \le \SI(H_{B,L}).
\]
Thus $\SI(H_{B,L})=0$ certifies avoidance of $B$.  More generally, if a finite
search has found $\SI(H_{B,L})$ distinct ordinary zeros of $H_{B,L}$, then no
further iterate can lie in $B$.
\end{proposition}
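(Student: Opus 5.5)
The plan is to reduce \cref{prop:projection-certificate} to Strassmann's theorem applied to the single polynomial $P_{B,L}$, in the same way as \cref{cor:local-dml}.

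First I will check the setup. Since the $u_i$ are in $\Zp$ and $B\subset\Zp^d$, each factor $L(x)-L(\beta)$ has coefficients in $\Zp$. Hence $P_{B,L}\in\Zp[x_1,\ldots,x_d]$, and all the constructions of \cref{sec:setup} apply with $F=P_{B,L}$. By \eqref{eq:H-gamma}, $H_{B,L}=H_{P_{B,L}}$. By \eqref{eq:H-interpolates}, for every $n\in\Np$,
\[
        H_{B,L}(n)=P_{B,L}(f^n(a))=\prod_{\beta\in B}\bigl(L(f^n(a))-L(\beta)\bigr).
\]

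The key inclusion is the following. If $f^n(a)=\beta_0\in B$, then the factor indexed by $\beta_0$ vanishes, so $H_{B,L}(n)=0$. The map $n\mapsto f^n(a)$ need not be injective, so I will bound the set of times $n$ directly, not the set of points hit. This gives
\[
        \{n\in\Np:f^n(a)\in B\}\subseteq\{n\in\Np:H_{B,L}(n)=0\}.
\]
Because $H_{B,L}\not\equiv0$ and $H_{B,L}\in\Qp\langle z\rangle$ (\cref{lem:analytic-powers}), \cref{fact:strassmann} and \cref{def:SI} show that the right-hand set has at most $\SI(H_{B,L})$ elements. That proves the main bound.

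The two consequences then follow directly.
\begin{itemize}
\item If $\SI(H_{B,L})=0$, the hitting set is empty, so the orbit avoids $B$.
\item Suppose a search has found $N=\SI(H_{B,L})$ distinct ordinary zeros $n_1,\ldots,n_N\in\Np$. Then these exhaust all zeros of $H_{B,L}$ in $\Zp$. So every hitting time lies among the $n_k$, and no iterate outside this finite list can lie in $B$.
\end{itemize}
The $n_k$ themselves may be spurious zeros, where $L(f^{n_k}(a))=L(\beta)$ but $f^{n_k}(a)\ne\beta$. That is harmless, since the statement only asserts that there are no further hits. One can then check directly which of the $n_k$ are genuine hits.

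I do not expect any serious obstacle. The only point needing care is the well-definedness of $H_{B,L}$ and its identification with the interpolant of $P_{B,L}$, and that is already supplied by \eqref{eq:H-gamma}.
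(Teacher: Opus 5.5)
Your proposal is correct and follows essentially the same route as the paper's proof. The paper likewise observes that every target hit at time $n$ is a zero of $H_{B,L}$ at $n$, applies Strassmann's theorem to get the bound, and deduces the completeness criterion from the fact that the found zeros exhaust all zeros of $H_{B,L}$ in $\Zp$.
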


\begin{proof}
If $f^n(a)\in B$, then $P_{B,L}(f^n(a))=0$, so every target hit is a zero of
$H_{B,L}$ at the corresponding time.  Strassmann's theorem gives the bound.
If all zeros allowed by the Strassmann index have already been found, then
there are no further zeros of $H_{B,L}$ in $\Zp$, hence no further target hits.
\end{proof}

\begin{remark}
The bad linear forms for which two distinct points of $B$ have the same
projection lie in a finite union of proper hyperplanes in the coefficient space
of $L$.  Thus a generic integral linear form separates the target values.  Even
when the projection has extra zeros not corresponding to target hits, the
certificate remains valid because it bounds a larger zero set.
\end{remark}

\section{One-shot bounds and first-order escape}\label{sec:bounds}

\begin{proposition}\label{prop:one-shot}
Assume $H_F\not\equiv0$.  Suppose that for some $n_0\in\Np$,
\[
        H_F(n_0)=F(f^{n_0}(a))\ne0,
\]
and set
\[
        \beta=\vp(H_F(n_0)).
\]
Then
\begin{equation}\label{eq:one-shot-bound}
        \SI(H_F)\le
        N_{p,q}(\beta):=
        \max\left\{0,
        \left\lfloor\frac{(p-1)\beta-1}{q(p-1)-1}\right\rfloor
        \right\}.
\end{equation}
\end{proposition}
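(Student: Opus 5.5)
The strategy is to bound $\alpha(H_F)$ from above by $\beta$, then use the tail estimate of \cref{lem:tail} (in its sharper form $qm-\vp(m!)\ge (Dm+1)/(p-1)$) to rule out large indices $j$ carrying the minimal valuation.

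\emph{Step 1: $\alpha(H_F)\le\beta$.} Write $H_F(z)=\sum_j c_jz^j$. Since $n_0\in\Zp$ and the series converges, $H_F(n_0)=\sum_j c_jn_0^j$ with $\vp(n_0^j)\ge0$. The ultrametric inequality then gives
\[
\beta=\vp(H_F(n_0))\ge\min_j\vp(c_j)=\alpha(H_F).
\]

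\emph{Step 2: lower bound for $\vp(c_j)$ in terms of $j$.} By \eqref{eq:cj-formula}, $c_j=\sum_{m\ge j}B_m s(m,j)/m!$ with $B_m\in p^{qm}\Zp$. For $j\ge1$ every summand has $m\ge1$. By \cref{fact:legendre}, as in the proof of \cref{lem:tail}, each summand therefore has valuation at least
\[
\frac{Dm+1}{p-1}\ge\frac{Dj+1}{p-1}.
\]
Hence $\vp(c_j)\ge (Dj+1)/(p-1)$ for all $j\ge1$.

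\emph{Step 3: conclude.} Let $J=\SI(H_F)$. If $J=0$ there is nothing to prove. Otherwise $J\ge1$ and $\vp(c_J)=\alpha(H_F)\le\beta$. Combining with Step 2 gives $(DJ+1)/(p-1)\le\beta$, that is,
\[
J\le\frac{(p-1)\beta-1}{D}.
\]
Since $J$ is an integer, $J\le\lfloor((p-1)\beta-1)/(q(p-1)-1)\rfloor$. Taking the maximum with $0$ to cover the case $J=0$ yields \eqref{eq:one-shot-bound}.

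The only delicate point is Step 2. The bound must use $m\ge j\ge1$ so that Legendre's formula applies and $s_p(m)\ge1$ supplies the $+1$ in the numerator. The term $m=0$ contributes only to $c_0$, which is why $j=0$ is excluded and why the maximum with $0$ appears in the statement. Everything else is routine.
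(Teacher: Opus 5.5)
Your proposal is correct and follows the paper's proof essentially step for step. The steps are: the ultrametric bound $\alpha(H_F)\le\beta$ from evaluating at $n_0\in\Zp$; the Legendre estimate $\vp(c_j)\ge (Dj+1)/(p-1)$ for $j\ge1$, using $s_p(m)\ge1$; and solving the resulting inequality for $\SI(H_F)$. Your explicit handling of the case $J=0$ (including $\beta=0$, where the floor is negative) just spells out what the paper leaves to "solving the resulting inequality."
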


\begin{proof}
Let $H_F(z)=\sum c_jz^j$ and set $\alpha=\min_j\vp(c_j)$.  Since $n_0\in\Zp$, evaluation at $n_0$ cannot decrease the minimum coefficient valuation, so $\alpha\le\beta$.  For $j\ge1$, \eqref{eq:cj-formula} and \cref{fact:legendre} give
\[
        \vp(c_j)\ge\frac{(q(p-1)-1)j+1}{p-1}.
\]
If this lower bound is greater than $\beta$, then $j$ cannot be an index at which the minimum coefficient valuation is attained.  Solving the resulting inequality gives \eqref{eq:one-shot-bound}.
\end{proof}

\begin{remark}
For $p=3$ and $q=1$, the bound is $N_{3,1}(\beta)=\max\{0,2\beta-1\}$.  The index bounds the number of zeros in $\Zp$, not the largest ordinary zero.  For instance, $z-p^L$ has one zero in $\Zp$, but that zero is the ordinary integer $p^L$.
\end{remark}

\begin{proposition}\label{prop:first-order-escape}
Let $F\in\Zp[x_1,\ldots,x_d]$ and $a\in\Zp^d$.  Assume
\[
        F(a)=0
\]
and
\begin{equation}\label{eq:transversality}
        L:=\sum_{i=1}^d
        \frac{\partial F}{\partial x_i}(a)\Phi_i(a)
        \in \Zp^\times.
\end{equation}
Then $\SI(H_F)=1$.  Hence $F(f^n(a))\ne0$ for every $n\ge1$.
\end{proposition}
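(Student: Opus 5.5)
The plan is to compute the ordinary power-series coefficients of $H_F(z)=\sum_j c_jz^j$ directly from \eqref{eq:cj-formula}. I will show that $c_0=0$, that $\vp(c_1)=q$, and that $\vp(c_j)>q$ for every $j\ge2$. Then the minimum coefficient valuation is $q$, attained only at $j=1$. This gives $H_F\not\equiv0$ and $\SI(H_F)=1$ by \cref{def:SI}. The consequence follows at once: by \cref{fact:strassmann}, $H_F$ has at most one zero in $\Zp$, and $z=0$ is a zero since $H_F(0)=F(a)=0$. So \eqref{eq:H-interpolates} gives $F(f^n(a))\ne0$ for all $n\ge1$.

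First, $c_0=H_F(0)=F(a)=0$. For $c_1$, I use $s(m,1)=(-1)^{m-1}(m-1)!$, which gives
\[
c_1=\sum_{m\ge1}\frac{(-1)^{m-1}}{m}B_m .
\]
The leading term is $B_1=F(f(a))-F(a)=F(a+p^q\Phi(a))-F(a)$. Since $F\in\Zp[x_1,\ldots,x_d]$ and $\Phi(a)\in\Zp^d$, a Taylor expansion with integral coefficients gives
\[
B_1=p^qL+O(p^{2q}).
\]
By \eqref{eq:transversality}, $L$ is a unit, so $\vp(B_1)=q$. For $m\ge2$, \eqref{eq:delta-divisibility} gives $\vp(B_m/m)\ge qm-\vp(m)$, and I need $q(m-1)>\vp(m)$.
\begin{itemize}
\item If $p\ge3$, then $\vp(m)<m-1$ for all $m\ge2$; the extreme case $m=p$ gives $1<p-1$.
\item If $p=2$, the standing hypothesis $q(p-1)>1$ forces $q\ge2$, so $q(m-1)\ge2(m-1)>\vp(m)$.
\end{itemize}
Hence every tail term has valuation strictly above $q$, and $\vp(c_1)=q$.

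For $j\ge2$, each term of $c_j$ has index $m\ge j\ge2$ and valuation at least $qm-\vp(m!)$. By \cref{fact:legendre}, $\vp(m!)\le (m-1)/(p-1)$, so
\[
qm-\vp(m!)-q\ \ge\ (m-1)\Bigl(q-\frac{1}{p-1}\Bigr)=\frac{(m-1)D}{p-1}>0 .
\]
This uses $D=q(p-1)-1>0$ from \eqref{eq:D-def}. Thus $\vp(c_j)>q$ for all $j\ge2$.

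The only delicate step is the strict inequality for the $m\ge2$ terms in $c_1$. There the factorial loss is only $\vp(m)$ rather than $\vp(m!)$, and the case $p=2$ needs the hypothesis $q\ge2$. Everything else is the same tail bookkeeping as in \cref{lem:tail}, checked against the threshold $q$ instead of $\lambda_{p,q}(M)$.
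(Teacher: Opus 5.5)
Your proof is correct and follows the paper's argument: $c_0=F(a)=0$, a Taylor expansion gives $\vp(B_1)=q$, every Mahler term with $m\ge2$ contributes valuation strictly above $q$ to each ordinary coefficient, and Strassmann together with $H_F(0)=0$ rules out zeros at $n\ge1$. The only difference is in the $m\ge2$ tail of $c_1$, where you use the exact value $s(m,1)=(-1)^{m-1}(m-1)!$ with a case split on $p$; the paper instead reuses the bound $qm-\vp(m!)\ge q+(m-1)D/(p-1)>q$, which you already apply for $j\ge2$, so that step is no more delicate than the rest.
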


\begin{proof}
Taylor expansion gives
\[
        F(a+p^q\Phi(a))-F(a)=p^qL+O(p^{2q}).
\]
Thus $B_1=(\Delta F)(a)$ has valuation $q$, while $B_0=0$.  For $m\ge2$, the contribution of the $m$th Mahler coefficient to the ordinary coefficient of $z$ has valuation at least
\[
        qm-\vp(m!)=q+\bigl(q(m-1)-\vp(m!)\bigr).
\]
By \cref{fact:legendre},
\[
        q(m-1)-\vp(m!)
        \ge \left(q-\frac1{p-1}\right)(m-1)>0,
\]
and the left-hand side is an integer.  Hence the displayed valuation is greater than $q$, and the ordinary coefficient $c_1$ has valuation $q$.

For $j\ge2$, every term contributing to $c_j$ has $m\ge j\ge2$, and its valuation is at least
\[
        qm-\vp(m!)=q+\bigl(q(m-1)-\vp(m!)\bigr)>q
\]
by the same estimate.  Since $c_0=H_F(0)=F(a)=0$, the least coefficient valuation is attained only at $j=1$, so $\SI(H_F)=1$.  Since $H_F(0)=0$, Strassmann's theorem implies that there are no further zeros.
\end{proof}

\section{The one-dimensional contact theory}\label{sec:one-dimensional}

In this section $d=1$ and
\[
        f(x)=x+p^q\Phi(x),
        \qquad \Phi\in\Zp\langle x\rangle,
        \qquad q(p-1)>1.
\]
Set
\[
        g(x)=f(x)-x=p^q\Phi(x).
\]
For $a\in\Zp$, write
\[
        \Gamma_a(z)=f^z(a).
\]
This analytic arc is supplied by \cref{fact:poonen}.

\begin{lemma}\label{lem:one-delta-factor}
For every $m\ge1$ there exists $Q_m\in\Zp\langle x\rangle$ such that
\[
        \Delta^m x=g(x)Q_m(x),
        \qquad
        Q_m\in p^{q(m-1)}\Zp\langle x\rangle.
\]
Consequently, if $\delta=f(a)-a$, then
\[
        (\Delta^m x)(a)\in \delta\,p^{q(m-1)}\Zp
        \qquad(m\ge1).
\]
\end{lemma}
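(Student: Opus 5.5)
I will argue by induction on $m$, using the identity $\Delta(GH)$-type calculus in the form of a divided difference.  The key observation is that for any $G\in\Zp\langle x\rangle$ one has
\[
        (\Delta G)(x)=G(x+g(x))-G(x)=g(x)\cdot(\delta_g G)(x),
\]
where, writing $G(x+t)-G(x)=t\,K_G(x,t)$ with $K_G\in\Zp\langle x,t\rangle$ (the coefficient-wise identity $(x+t)^k-x^k=t\sum_{i}\binom ki x^{k-i}t^{i-1}$ shows $K_G$ has integral coefficients tending to zero), we set $(\delta_gG)(x)=K_G(x,g(x))\in\Zp\langle x\rangle$.  Substitution of $t=g(x)\in p^q\Zp\langle x\rangle$ into a restricted series in $(x,t)$ is legitimate in the Tate algebra.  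The first claim I then want is slightly stronger: if $G=gQ$ with $Q\in p^{r}\Zp\langle x\rangle$, then $\Delta G=g\,Q'$ with $Q'\in p^{r+q}\Zp\langle x\rangle$.

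For the base case $m=1$, $\Delta x=f(x)-x=g(x)$, so $Q_1=1\in p^0\Zp\langle x\rangle$.  For the inductive step, assume $\Delta^m x=gQ_m$ with $Q_m\in p^{q(m-1)}\Zp\langle x\rangle$.  Then
\[
        \Delta^{m+1}x=\T(gQ_m)-gQ_m=(g\circ f)(Q_m\circ f)-gQ_m
        =(g\circ f-g)(Q_m\circ f)+g\,(Q_m\circ f-Q_m).
\]
By the divided-difference identity, $g\circ f-g=\Delta g=g\cdot\delta_g g$ and $Q_m\circ f-Q_m=g\cdot\delta_gQ_m$.  Hence
\[
        \Delta^{m+1}x=g\bigl[(\delta_g g)(Q_m\circ f)+g\,\delta_gQ_m\bigr].
\]
Now $g\in p^q\Zp\langle x\rangle$ implies $\delta_g g\in p^q\Zp\langle x\rangle$ (since $K_g$ is linear in the coefficients of $g$), while $Q_m\circ f$ and $\delta_gQ_m$ lie in $p^{q(m-1)}\Zp\langle x\rangle$ because composition with $f$ and the operator $\delta_g$ preserve integrality and are $\Zp$-linear.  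The bracket therefore lies in $p^{qm}\Zp\langle x\rangle$, which defines $Q_{m+1}$ and completes the induction.

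The consequence follows by evaluating at $a\in\Zp$: $(\Delta^mx)(a)=g(a)Q_m(a)$ with $g(a)=f(a)-a=\delta$ and $Q_m(a)\in p^{q(m-1)}\Zp$, since evaluating a restricted series with coefficients in $p^{r}\Zp$ at a point of $\Zp$ lands in $p^r\Zp$.  The only step needing care is the justification that $\delta_g$ maps $\Zp\langle x\rangle$ into itself and is compatible with the $p$-adic filtration, i.e.\ that $K_G(x,t)$ is a genuine restricted series with integral coefficients and that substituting $t=g(x)$ converges; I expect this to be routine from the binomial expansion and the fact that $g$ has integral coefficients tending to zero, so no real obstacle arises.
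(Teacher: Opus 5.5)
Your proof is correct and follows essentially the paper's argument: factor $G\circ f-G$ as $g$ times an integral restricted series via the divided difference, then run the induction on $\Delta(gQ_m)=(g\circ f-g)(Q_m\circ f)+g\,(Q_m\circ f-Q_m)$, where each term gains a factor $p^q$. The only cosmetic difference is that the paper writes $g\circ f=g(1+p^qR)$ and uses $\Delta(\mathcal A)\subseteq p^q\mathcal A$ for $U_m\circ f-U_m$, whereas you apply the divided-difference factorization to both terms.
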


\begin{proof}
For any $H\in\Zp\langle x\rangle$, write $H(T)=\sum_{n\ge0}h_nT^n$.  Then
\[
        H(Y)-H(X)=(Y-X)Q_H(X,Y),
\]
where
\[
        Q_H(X,Y)=
        \sum_{n\ge1}h_n\sum_{i=0}^{n-1}Y^iX^{n-1-i}
        \in\Zp\langle X,Y\rangle.
\]
The assertion that $Q_H$ is restricted follows because the coefficient of $X^rY^s$ is $h_{r+s+1}$, which tends to $0$ $p$-adically as $r+s\to\infty$.  Substituting $Y=f(x)$ and $X=x$ gives
\[
        H(f(x))-H(x)=(f(x)-x)Q_H(x,f(x)),
\]
so the difference is divisible by $f(x)-x=g(x)$ in the Tate algebra.  For $m=1$, $\Delta x=g$, so $Q_1=1$.

Assume $\Delta^m x=gQ_m$ with $Q_m=p^{q(m-1)}U_m$.  Since $g=p^q\Phi$, the preceding divisibility applied to $H=\Phi$ gives $\Phi(f)-\Phi=gR$ for some $R\in\Zp\langle x\rangle$, and hence
\[
        g(f(x))=p^q\Phi(f(x))=g(x)+p^qg(x)R(x)=g(x)(1+p^qR(x)).
\]
Therefore
\begin{align*}
        \Delta^{m+1}x
        &=\Delta(gQ_m) \\
        &=g(f)Q_m(f)-gQ_m \\
        &=g\,p^{q(m-1)}\bigl(U_m(f)-U_m+p^qR\,U_m(f)\bigr).
\end{align*}
Since $U_m(f)-U_m\in p^q\Zp\langle x\rangle$, the bracket lies in $p^q\Zp\langle x\rangle$, proving the induction step.
\end{proof}

\begin{lemma}\label{lem:orbit-denominator}
For every $m\ge2$ one has
\[
        q(m-1)-\vp(m!)\ge1.
\]
\end{lemma}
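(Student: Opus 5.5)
We prove the inequality $q(m-1)-\vp(m!)\ge1$ for $m\ge2$ directly from Legendre's formula (\cref{fact:legendre}), together with integrality. The argument has the same shape as the estimate in the proof of \cref{prop:first-order-escape}.

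First, write $\vp(m!)=(m-s_p(m))/(p-1)$. Since $m\ge2$ forces $s_p(m)\ge1$, this gives $\vp(m!)\le (m-1)/(p-1)$. Hence
\[
        q(m-1)-\vp(m!)\ge\Bigl(q-\frac1{p-1}\Bigr)(m-1)=\frac{D(m-1)}{p-1},
\]
where $D=q(p-1)-1>0$ by the standing hypothesis \eqref{eq:q-condition}. Since $m-1\ge1$, the right-hand side is strictly positive.

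Second, the left-hand side $q(m-1)-\vp(m!)$ is an integer, because $q\ge1$ is an integer and $\vp(m!)\in\Np$. A strictly positive integer is at least $1$, which gives the claim.

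There is no real obstacle here. The only point needing care is that the strict positivity $D>0$ must be combined with integrality, since the real lower bound $D(m-1)/(p-1)$ may be less than $1$ (for instance, $p=2$, $q=1$ is excluded, but $p=3$, $q=1$ gives $D=1$). The fallback, if one prefers to avoid the integrality step, is a direct check of small $m$. That is unnecessary, because integrality finishes the argument uniformly.
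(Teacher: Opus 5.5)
Your proposal is correct and matches the paper's proof: Legendre's formula gives $\vp(m!)\le (m-1)/(p-1)$, hence $q(m-1)-\vp(m!)\ge\bigl(q-\frac{1}{p-1}\bigr)(m-1)>0$, and integrality upgrades this to $\ge1$. Your remark that integrality is genuinely needed (e.g.\ $p=3$, $q=1$, $m=2$ gives only the real bound $1/2$) is a correct and useful observation.
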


\begin{proof}
By \cref{fact:legendre},
\[
        \vp(m!)=\frac{m-s_p(m)}{p-1}\le\frac{m-1}{p-1}.
\]
Hence
\[
        q(m-1)-\vp(m!)
        \ge\left(q-\frac1{p-1}\right)(m-1)>0.
\]
The left-hand side is an integer, so it is at least $1$.
\end{proof}

\begin{theorem}\label{thm:orbit-ball}
Let $a\in\Zp$.  If $f(a)=a$, then $\Gamma_a(z)\equiv a$.  If $f(a)\ne a$, set
\[
        \delta=f(a)-a,
        \qquad s=\vp(\delta).
\]
Then there exists $W_a(z)\in\Zp\langle z\rangle$ such that
\[
        \Gamma_a(z)=a+\delta W_a(z),
        \qquad W_a(z)=z+p\Theta_a(z)
\]
with $\Theta_a\in\Zp\langle z\rangle$.  Hence $W_a$ is an analytic automorphism of the closed unit disc and
\[
        \Gamma_a:\Zp\xrightarrow{\sim}a+\delta\Zp=a+p^s\Zp.
\]
\end{theorem}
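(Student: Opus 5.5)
The plan is to write the arc as its Mahler series and read off its coefficients using \cref{lem:one-delta-factor} and \cref{lem:orbit-denominator}. By \eqref{eq:H-def} applied to $F=x$,
\[
        \Gamma_a(z)=(\T^z x)(a)=a+\sum_{m\ge1}(\Delta^m x)(a)\binom zm .
\]
\emph{Fixed point.} If $f(a)=a$, then $\delta=0$. \Cref{lem:one-delta-factor} then gives $(\Delta^m x)(a)\in\delta\,p^{q(m-1)}\Zp=0$ for all $m\ge1$, so $\Gamma_a\equiv a$.

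\emph{Non-fixed point.} Now assume $\delta\ne0$. By \cref{lem:one-delta-factor} we may write $(\Delta^m x)(a)=\delta\,p^{q(m-1)}u_m$ with $u_m\in\Zp$, and $u_1=1$ since $(\Delta x)(a)=\delta$. This gives $\Gamma_a(z)=a+\delta W_a(z)$ with
\[
        W_a(z)=z+\sum_{m\ge2}p^{q(m-1)}u_m\binom zm
        =z+\sum_{m\ge2}\frac{p^{q(m-1)}u_m}{m!}\sum_{j=0}^m s(m,j)z^j .
\]
By \cref{lem:orbit-denominator}, each summand $p^{q(m-1)}u_m s(m,j)/m!$ with $m\ge2$ has valuation at least $q(m-1)-\vp(m!)\ge1$. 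Hence the coefficient of $z^j$ in $W_a-z$ lies in $p\Zp$. To see that these coefficients tend to $0$ in $j$, and that the double series may be rearranged, use the estimate $q(m-1)-\vp(m!)\ge (q-\tfrac1{p-1})(m-1)$ from the proof of \cref{lem:orbit-denominator}. It tends to infinity uniformly for $m\ge j$, exactly as in the proof of \cref{lem:analytic-powers}. Thus $W_a=z+p\Theta_a$ with $\Theta_a\in\Zp\langle z\rangle$.

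\emph{Automorphism.} Since $W_a\equiv z\pmod p$, \cref{fact:inverse} with $u=1$ shows that $W_a$ is an analytic automorphism of the closed unit disc. Its coefficients lie in $\Zp$, so it maps $\Zp$ into $\Zp$. I expect the main subtlety to be the restriction to $\Zp$: the inverse is also given by a series over $\Zp$, because the inverse-function iteration $w\mapsto z-p\Theta_a(w)$ is a contraction on $\Zp$, so $W_a$ restricts to a bijection of $\Zp$. Composing with the affine bijection $w\mapsto a+\delta w$ from $\Zp$ onto $a+\delta\Zp=a+p^s\Zp$ gives the isomorphism $\Gamma_a:\Zp\xrightarrow{\sim}a+p^s\Zp$.
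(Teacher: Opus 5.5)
Your proposal is correct and follows the paper's proof essentially step for step. Both expand $\Gamma_a$ as a Mahler series, use \cref{lem:one-delta-factor} to write $(\Delta^m x)(a)\in\delta\,p^{q(m-1)}\Zp$, apply \cref{lem:orbit-denominator} to get $W_a-z\in p\Zp\langle z\rangle$, and cite \cref{fact:inverse} for the automorphism. Your two departures are both valid: you treat the fixed case through the lemma with $\delta=0$ rather than by density, and you give an explicit contraction argument with $w\mapsto z-p\Theta_a(w)$ to show $W_a$ is a bijection of $\Zp$, a point the paper leaves implicit in the cited fact.
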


\begin{proof}
If $f(a)=a$, then every ordinary iterate is equal to $a$; by interpolation, the analytic arc is constant.  Assume $\delta\ne0$.  By \cref{lem:one-delta-factor},
\[
        (\Delta^m x)(a)=\delta Q_m(a),
        \qquad Q_m(a)\in p^{q(m-1)}\Zp,
\]
with $Q_1(a)=1$.  Hence
\[
        \Gamma_a(z)=a+
        \delta\left(z+
        \sum_{m\ge2}Q_m(a)\binom zm\right).
\]
Set
\[
        W_a(z)=z+\sum_{m\ge2}Q_m(a)\binom zm.
\]
After expanding the Mahler polynomials in ordinary powers of $z$, the contribution of the $m$th summand to any ordinary coefficient has valuation at least
\[
        q(m-1)-\vp(m!).
\]
By \cref{lem:orbit-denominator}, this is at least $1$ for every $m\ge2$, so all coefficients of the tail lie in $p\Zp$.  Moreover, for ordinary degree $j$, the same estimate gives the lower bound
\[
        \inf_{m\ge\max\{2,j\}}\{q(m-1)-\vp(m!)\},
\]
which tends to $+\infty$ as $j\to\infty$.  Therefore the tail is an element of $p\Zp\langle z\rangle$, so $W_a(z)=z+p\Theta_a(z)$ with $\Theta_a\in\Zp\langle z\rangle$.  The analytic automorphism assertion follows from \cref{fact:inverse}.
\end{proof}

\begin{lemma}\label{lem:flow-law}
For $a\in\Zp$ and $z,t\in\Zp$,
\[
        \Gamma_a(z+t)=\Gamma_{\Gamma_a(z)}(t).
\]
\end{lemma}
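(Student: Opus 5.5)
The plan is to reduce the flow law to the semigroup identity $\T^{z+t}=\T^z\T^t$ from \cref{lem:analytic-powers}, combined with the fact that the interpolated arc is obtained by evaluating $\T^z x$ at a point. Here $d=1$ and $\Gamma_a(z)=(\T^z x)(a)$. The key intermediate identity is that, for every $G\in\A$ and every $b\in\Zp$,
\[
        (\T^t G)(b)=G\text{-interpolant at }b,\qquad\text{i.e.}\qquad (\T^tG)(\Gamma_a(z)) = (\T^z\T^t G)(a).
\]

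The natural route goes through the algebra-automorphism property. First I would establish, for any $G\in\A$, the composition rule
\[
        (\T^z G)(a)=G(\Gamma_a(z)).
\]
Since $\T^z$ is a continuous $\Zp$-algebra endomorphism, it commutes with polynomials in $x$. By continuity and restrictedness it then commutes with convergent power series, giving $\T^zG=G(\T^zx)$ in $\A$. Evaluating at $a$ yields the rule, exactly as in \eqref{eq:H-gamma}. This step requires $\Gamma_a(z)\in\Zp$ so that $G$ may be evaluated there, and that holds because $\T^z x\in\A$ and $a\in\Zp$.

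Next, apply the rule with $G=\T^t x$ and base point $a$:
\[
        \Gamma_{\Gamma_a(z)}(t)=(\T^tx)(\Gamma_a(z))=(\T^z\T^t x)(a)=(\T^{z+t}x)(a)=\Gamma_a(z+t).
\]
The third equality is the group law of \cref{lem:analytic-powers}.

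The main obstacle is justifying $\T^zG=G\circ(\T^z x)$ for a restricted power series $G$ rather than a polynomial. I would handle it by a density argument, as in the multiplicativity proof of \cref{lem:analytic-powers}. For $n\in\Np$ the identity is just $G\circ f^n=G\circ f^n$. Both sides are continuous in $z\in\Zp$: the left side by convergence of the Mahler series, the right side because $\T^z x$ varies continuously in $\A$ with values in $\Zp\langle x\rangle$, and composition into a restricted series is continuous. Since $\Np$ is dense in $\Zp$, the identity holds for all $z$.

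As a fallback, one could prove the flow law directly by density in both variables. For $z,t\in\Np$ it reads $f^{z+t}(a)=f^t(f^z(a))$. Both sides are continuous in $(z,t)$, since $\Gamma_b(t)$ is jointly continuous in $(b,t)$ by \cref{fact:poonen}, where $G(x,z)\in\A\langle z\rangle$. Density of $\Np^2$ in $\Zp^2$ then gives the identity everywhere, which is perhaps the cleanest version.
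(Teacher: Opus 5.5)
Your argument is correct and is essentially the paper's proof, which simply evaluates the operator identity $\T^{z+t}=\T^z\T^t$ of \cref{lem:analytic-powers} at the coordinate function and then at $a$. The paper leaves tacit the step that identifies $(\T^z\T^t x)(a)$ with $\Gamma_{\Gamma_a(z)}(t)$; your substitution rule $\T^zG=G\circ(\T^z x)$ for restricted $G\in\A$, applied to $G=\T^t x$, is exactly that step, and either of your justifications of it (continuity of the algebra endomorphism, or density of $\Np$ in $\Zp$) works.
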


\begin{proof}
This is the operator identity $\T^{z+t}=\T^z\T^t$ from \cref{lem:analytic-powers}, evaluated at the coordinate function and then at $a$.
\end{proof}

\subsection{Contact order}

Let $F\in\Zp[x]$ be nonzero.  For $b\in\Zp$, let
\[
        m_b(F)=\ord_{x=b}F
\]
with $m_b(F)=0$ if $F(b)\ne0$.  Define
\[
        \kappa_b(F;f)=\ord_{z=0}F(\Gamma_b(z)),
\]
with $\kappa_b(F;f)=\infty$ if $F(\Gamma_b(z))\equiv0$.

\begin{theorem}\label{thm:contact-formula}
For $b\in\Zp$,
\[
        \kappa_b(F;f)=
        \begin{cases}
        0, & F(b)\ne0,\\[3pt]
        \infty, & F(b)=0,\ f(b)=b,\\[3pt]
        m_b(F), & F(b)=0,\ f(b)\ne b.
        \end{cases}
\]
More generally, if $a\in\Zp$, $\zeta\in\Zp$, and $b=\Gamma_a(\zeta)$, then the order of $F(\Gamma_a(z))$ at $z=\zeta$ is given by the same formula with this $b$.
\end{theorem}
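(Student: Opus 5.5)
The plan is to treat the three cases of the formula separately, using \cref{thm:orbit-ball} as the main input, and then to deduce the general statement at $z=\zeta$ from the flow law \cref{lem:flow-law}.

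\emph{The first two cases.} Since $\Gamma_b(0)=b$, the value of $F(\Gamma_b(z))$ at $z=0$ is $F(b)$. So if $F(b)\ne0$, the order of vanishing at $0$ is $0$. If $F(b)=0$ and $f(b)=b$, then \cref{thm:orbit-ball} gives $\Gamma_b\equiv b$. Hence $F(\Gamma_b(z))\equiv F(b)=0$, and $\kappa_b=\infty$ by the stated convention.

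\emph{The non-fixed case.} Suppose $F(b)=0$ and $\delta=f(b)-b\ne0$.
\begin{enumerate}[label=\textup{(\roman*)}]
\item By \cref{thm:orbit-ball}, $\Gamma_b(z)=b+\delta W_b(z)$ with $W_b(z)=z+p\Theta_b(z)$. Since $W_b(0)=0$, we can write $W_b(z)=z\,U(z)$ for some $U\in\Zp\langle z\rangle$.
\item The constant term of $U$ is $U(0)=W_b'(0)=1+p\Theta_b'(0)$, which is a unit. Hence $U$ does not vanish at $z=0$; in fact $U$ is a unit of $\Zp\langle z\rangle$, because $U\equiv 1\pmod p$.
\item Write $m=m_b(F)$ and factor $F(x)=(x-b)^m F_1(x)$ with $F_1\in\Qp[x]$ and $F_1(b)\ne0$. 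Then
\[
F(\Gamma_b(z))=\delta^m z^m U(z)^m F_1(\Gamma_b(z)).
\]
\item At $z=0$, the factor $U(z)^m$ takes the value $U(0)^m\ne0$, and $F_1(\Gamma_b(z))$ takes the value $F_1(b)\ne0$. Therefore the order at $z=0$ is exactly $m$.
\end{enumerate}
The only point needing care here is that $F$ is a polynomial, so the factorization and the evaluation of orders are purely algebraic in the ring of convergent power series at $0$.

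\emph{The general statement at $\zeta$.} Let $b=\Gamma_a(\zeta)$. By \cref{lem:flow-law}, $F(\Gamma_a(\zeta+t))=F(\Gamma_b(t))$ as analytic functions of $t\in\Zp$. The order of $F(\Gamma_a(z))$ at $z=\zeta$ is the order of $t\mapsto F(\Gamma_a(\zeta+t))$ at $t=0$, which equals $\kappa_b(F;f)$.

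Two things must be checked for this step:
\begin{enumerate}[label=\textup{(\roman*)}]
\item The order of a restricted power series at an interior point $\zeta\in\Zp$ is well defined. Translation $z\mapsto\zeta+t$ preserves $\Zp\langle z\rangle$ because $\zeta\in\Zp$, so the recentred expansion is again restricted.
\item The identity from \cref{lem:flow-law}, which holds pointwise on $\Zp$, holds as an identity of power series in $t$. Two elements of $\Qp\langle t\rangle$ that agree on all of $\Zp$ coincide, since their difference has infinitely many zeros and would contradict \cref{fact:strassmann} unless it is zero.
\end{enumerate}

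I expect the main subtlety to be the identification in the last step: making sure that the orbit arc based at $b$, namely $\Gamma_b$, really is the recentred arc $\Gamma_a(\zeta+\cdot)$ as formal expansions, and not only at integer times. The argument above through \cref{lem:flow-law} and Strassmann uniqueness should settle this.
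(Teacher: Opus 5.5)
Your proposal is correct and follows essentially the same route as the paper. The first two cases are immediate, the non-fixed case rests on $\Gamma_b(z)-b=\delta z U(z)$ with $U(0)$ a unit (from \cref{thm:orbit-ball}), and the statement at $z=\zeta$ is transported by the flow law; you additionally spell out two points the paper leaves implicit, namely that $W_b(0)=0$ yields the simple zero and that the flow-law identity holds in $\Qp\langle t\rangle$ by Strassmann uniqueness.
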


\begin{proof}
If $F(b)\ne0$, the order is $0$.  If $F(b)=0$ and $f(b)=b$, the arc is constant and the pullback is identically zero.  If $F(b)=0$ and $f(b)\ne b$, write
\[
        F(x)=(x-b)^eU(x),
        \qquad e=m_b(F),
        \qquad U(b)\ne0.
\]
By \cref{thm:orbit-ball}, $\Gamma_b(z)-b$ has a simple zero at $z=0$, so $F(\Gamma_b(z))$ has order $e$.  The statement at $z=\zeta$ follows from the flow law.
\end{proof}

\subsection{Root-counting interpretation}

\begin{theorem}\label{thm:one-dimensional-root-count}
Let $F\in\Zp[x]$ be nonzero and let $a\in\Zp$.
\begin{enumerate}[label=\textup{(\alph*)}]
\item If $f(a)=a$, then $F(\Gamma_a(z))\equiv F(a)$.  Hence the return set is $\Np$ if $F(a)=0$ and is empty if $F(a)\ne0$.
\item If $f(a)\ne a$, set $\delta=f(a)-a$ and $s=\vp(\delta)$.  Then
\begin{equation}\label{eq:SI-one-dim-delta}
        \SI(F(\Gamma_a(z)))=
        \SI(F(a+\delta w))=
        \SI(F(a+p^s w)).
\end{equation}
Moreover,
\begin{equation}\label{eq:root-count-ball}
        \SI(F(\Gamma_a(z)))
        =\sum_{\substack{\alpha\in\OCp\\F(\alpha)=0\\
        \vp(\alpha-a)\ge s}}m_\alpha(F).
\end{equation}
Equivalently,
\begin{equation}\label{eq:derivative-formula}
        \SI(F(\Gamma_a(z)))=
        \max\left\{j:
        \vp\left(p^{sj}\frac{F^{(j)}(a)}{j!}\right)
        =
        \min_i\vp\left(p^{si}\frac{F^{(i)}(a)}{i!}\right)
        \right\}.
\end{equation}
\end{enumerate}
\end{theorem}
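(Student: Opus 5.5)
Part (a) follows directly from \cref{thm:orbit-ball}. If $f(a)=a$, the arc is constant, so $F(\Gamma_a(z))$ is the constant $F(a)$. By \eqref{eq:H-interpolates}, $F(f^n(a))=F(a)$ for every $n$, which gives the stated dichotomy for the return set.

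For (b), I would first prove the two equalities in \eqref{eq:SI-one-dim-delta}. \Cref{thm:orbit-ball} gives $\Gamma_a(z)=a+\delta W_a(z)$, where $W_a=z+p\Theta_a$ is an analytic automorphism of the closed unit disc by \cref{fact:inverse}. Hence
\[
F(\Gamma_a(z))=G(W_a(z)),\qquad G(w):=F(a+\delta w)\in\Zp[w].
\]
$G$ is nonzero because $F\ne0$ and $\delta\ne0$. By \cref{fact:weierstrass}, $\SI$ is the Weierstrass degree, which is invariant under composition with automorphisms of the closed disc. So $\SI(F(\Gamma_a))=\SI(G)$. I expect the only delicate point here is verifying that the invariance cited in \cref{fact:weierstrass} applies to composition on the right. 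I would justify it directly: $W_a$ reduces mod $p$ to $z$, and Weierstrass degree equals the degree of the reduction of the normalized series. Composing with $W_a$ leaves the mod-$p$ reduction of $G$ unchanged up to substituting $z$ for $w$, so the degree is preserved.

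Next, write $\delta=p^s\varepsilon$ with $\varepsilon\in\Zp^\times$. Then $F(a+\delta w)=F(a+p^s(\varepsilon w))$. The coefficient of $w^j$ changes by the unit $\varepsilon^j$, so the valuations of all coefficients are unchanged and the two indices agree. Taylor expansion gives the coefficient of $w^j$ in $F(a+p^sw)$ as $p^{sj}F^{(j)}(a)/j!$. This is exactly \eqref{eq:derivative-formula}.

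For the root count \eqref{eq:root-count-ball}, I would apply \cref{fact:weierstrass} to $G_s(w)=F(a+p^sw)$. Its Weierstrass degree is the number of zeros $w\in\OCp$ counted with multiplicity. Factor $F=c\prod_\alpha(x-\alpha)^{m_\alpha}$ over $\Cp$. Then the zeros of $G_s$ in the closed unit disc are the points $w=(\alpha-a)/p^s$ with $\vp(\alpha-a)\ge s$, and each has multiplicity $m_\alpha(F)$ because the substitution $x=a+p^sw$ is affine. Alternatively, one can argue by Newton polygons: each factor $(a-\alpha)+p^sw$ contributes Weierstrass degree $1$ exactly when $\vp(\alpha-a)\ge s$ and $0$ otherwise, and Weierstrass degree is additive on products. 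Summing over the roots gives \eqref{eq:root-count-ball}.
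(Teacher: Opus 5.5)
Your proposal is correct and follows the paper's proof essentially step for step. Like the paper, you use \cref{thm:orbit-ball} to write $F(\Gamma_a(z))=F(a+\delta W_a(z))$, pass through the disc automorphism $W_a$ and the unit rescaling $w\mapsto\varepsilon w$, and then apply \cref{fact:weierstrass} for the root count and Taylor expansion for \eqref{eq:derivative-formula}. The only difference is the invariance of the index under $W_a$: the paper cites \cref{fact:weierstrass} for this, while you prove it directly. After normalizing $G$ to have unit content, $G(z+p\Theta_a(z))\equiv G(z)\pmod p$, so the reduction mod $p$, and hence the index, is unchanged; this is a valid elementary proof of that step.
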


\begin{proof}
Part (a) follows from \cref{thm:orbit-ball}.  For (b), \cref{thm:orbit-ball} gives
\[
        \Gamma_a(z)=a+\delta W_a(z),
\]
where $W_a$ is an analytic automorphism of the closed unit disc.  Thus
\[
        F(\Gamma_a(z))=F(a+\delta W_a(z)).
\]
By \cref{fact:weierstrass}, analytic changes of parameter preserve the Strassmann index.  Since $\delta=p^su$ with $u\in\Zp^\times$, the change $w\mapsto uw$ is also an analytic automorphism, giving \eqref{eq:SI-one-dim-delta}.

The roots of $F(a+p^sw)$ with $w\in\OCp$ are precisely the roots $\alpha$ of $F$ with $\vp(\alpha-a)\ge s$, with multiplicities preserved.  The root-counting formula follows from \cref{fact:weierstrass}.  Finally,
\[
        F(a+p^sw)=\sum_{j\ge0}p^{sj}\frac{F^{(j)}(a)}{j!}w^j,
\]
which gives \eqref{eq:derivative-formula}.
\end{proof}

\begin{remark}\label{rem:roots-vs-returns}
The right-hand side of \eqref{eq:root-count-ball} counts all roots of $F$ in the closed $\Cp$-ball $a+p^s\OCp$.  Ordinary return times correspond only to those zeros of the interpolant whose $p$-adic time lies in the subset $\Np\subset\Zp$.  Thus the root count is an a priori zero bound; the actual return set can be smaller.
\end{remark}

\begin{corollary}\label{cor:weighted-return-bound}
Assume $f(a)\ne a$ and set $s=\vp(f(a)-a)$.  Then
\[
        \sum_{\substack{n\ge0\\F(f^n(a))=0}}
        m_{f^n(a)}(F)
        \le \SI(F(a+p^sw)).
\]
In particular, the number of ordinary returns is at most this Strassmann index.
\end{corollary}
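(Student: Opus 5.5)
The plan is to convert the weighted count on the left into a count of zeros of the interpolant $H(z)=F(\Gamma_a(z))$ in $\Zp$ with multiplicity, and then apply \cref{thm:one-dimensional-root-count}.

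\textbf{Step 1: the interpolant is nonzero.} Since $F\ne0$ and $f(a)\ne a$, \cref{thm:orbit-ball} shows that $\Gamma_a$ is an analytic isomorphism of $\Zp$ onto the infinite set $a+p^s\Zp$. A nonzero polynomial $F$ cannot vanish on all of this set. Hence $H\not\equiv0$, and part (b) of \cref{thm:one-dimensional-root-count} applies.

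\textbf{Step 2: contact order at each return.} Let $n\ge0$ with $F(f^n(a))=0$, and put $b=f^n(a)=\Gamma_a(n)$.
\begin{itemize}
\item The point $b$ is not fixed by $f$. Since $\Gamma_a$ is injective, $\Gamma_a(n+1)\ne\Gamma_a(n)$, that is, $f(b)\ne b$.
\item By the general form of \cref{thm:contact-formula}, with $\zeta=n$, the order of $H$ at $z=n$ therefore equals $m_b(F)$.
\end{itemize}

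\textbf{Step 3: the returns are distinct zeros.} Distinct $n$ are distinct points of $\Zp$. So the left-hand side equals
\[
\sum_{n\in\Np,\ H(n)=0}\ord_{z=n}H,
\]
which is at most the total number of zeros of $H$ in $\Zp$ counted with multiplicity.

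\textbf{Step 4: bound the zero count.} The total number of zeros of $H$ in $\Zp$ with multiplicity is at most the number of zeros in the closed unit disc over $\Cp$. By \cref{fact:weierstrass} (equivalently \cref{fact:strassmann} in its multiplicity form), that number is $\SI(H)$. By \eqref{eq:SI-one-dim-delta}, $\SI(H)=\SI(F(a+p^sw))$.

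\textbf{Conclusion.} The unweighted statement follows because $m_{f^n(a)}(F)\ge1$ at every return.

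\textbf{Main obstacle.} The only subtle point is Step 2: justifying that the multiplicity along the arc at time $n$ equals the root multiplicity. This relies on the flow law and on the non-fixedness of the intermediate point $b$, and injectivity of $\Gamma_a$ supplies that non-fixedness.
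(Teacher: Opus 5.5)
Your proof is correct and follows essentially the same route as the paper. Injectivity of $\Gamma_a$ from \cref{thm:orbit-ball} shows that no $f^n(a)$ is fixed, \cref{thm:contact-formula} identifies each return's contribution with $m_{f^n(a)}(F)$, and Strassmann/Weierstrass bound the total multiplicity; your Step 1 ($F(\Gamma_a(z))\not\equiv0$) and your explicit appeal to \eqref{eq:SI-one-dim-delta} fill in points the paper leaves implicit.
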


\begin{proof}
Since $f(a)\ne a$, \cref{thm:orbit-ball} shows that $\Gamma_a$ is injective on
$\Zp$.  Hence no point $f^n(a)$ is fixed by $f$: otherwise
\[
        \Gamma_a(n+1)=f(f^n(a))=f^n(a)=\Gamma_a(n),
\]
contradicting injectivity.  By \cref{thm:contact-formula}, each ordinary return
therefore contributes exactly the multiplicity of the root it hits to the zero
divisor of $F(\Gamma_a(z))$.
By \cref{fact:strassmann,fact:weierstrass}, the total zero multiplicity in
$\Zp$ is bounded by the Strassmann index.
\end{proof}

\begin{corollary}\label{cor:practical-criteria}
Assume $f(a)\ne a$ and set $s=\vp(f(a)-a)$.
\begin{enumerate}[label=\textup{(\roman*)}]
\item If $\SI(F(a+p^sw))=0$, then there are no returns.
\item If $F(a)=0$ and $a$ is the only root of $F$ in $a+p^s\OCp$, counted with multiplicity $m_a(F)$, then the only return is $n=0$.
\item If already found return times account for total multiplicity $\SI(F(a+p^sw))$, then the search has found all returns.
\end{enumerate}
\end{corollary}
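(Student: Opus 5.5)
The plan is to derive all three items of \cref{cor:practical-criteria} from \cref{cor:weighted-return-bound} and the root-count formula \eqref{eq:root-count-ball} of \cref{thm:one-dimensional-root-count}. Throughout I fix $f(a)\ne a$ and $s=\vp(f(a)-a)$. By \cref{thm:orbit-ball}, $\Gamma_a$ is injective, so no iterate $f^n(a)$ is a fixed point of $f$; this is the same argument used in the proof of \cref{cor:weighted-return-bound}. The basic tool is the weighted inequality
\[
        \sum_{\substack{n\ge0\\F(f^n(a))=0}} m_{f^n(a)}(F)\le \SI(F(a+p^sw)).
\]

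For (i), if the index is $0$, the weighted sum on the left is $0$. Each return $n$ contributes $m_{f^n(a)}(F)\ge1$, so there can be no returns.

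For (ii), I compute the index with \eqref{eq:root-count-ball}. The hypothesis says the only root of $F$ in the ball $a+p^s\OCp$ is $a$, with multiplicity $m_a(F)$, so $\SI(F(a+p^sw))=m_a(F)$. The return $n=0$ is present because $F(a)=0$, and it already contributes $m_a(F)$ to the left side of the weighted inequality. Any further return $n\ge1$ would add at least $1$, which contradicts the inequality. So $n=0$ is the only return. An alternative route: any return point $f^n(a)=\Gamma_a(n)$ lies in $a+p^s\Zp$, hence is a root of $F$ in the ball and must equal $a$; injectivity of $\Gamma_a$ then forces $n=0$.

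For (iii), let $S_0$ be the set of returns already found, with $\sum_{n\in S_0}m_{f^n(a)}(F)=\SI(F(a+p^sw))$. If there were another return $n\notin S_0$, it would add a positive term to the weighted sum and push it above the index, again contradicting \cref{cor:weighted-return-bound}. This works because distinct returns are distinct zeros of $F(\Gamma_a(z))$, and by \cref{thm:contact-formula} each contributes its full multiplicity to the zero divisor.

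None of these steps is a real obstacle. The only point to watch is in (ii), to make sure the multiplicity bookkeeping counts $n=0$ with weight $m_a(F)$ rather than $1$. That is exactly what \cref{thm:contact-formula} provides, since $f(a)\ne a$.
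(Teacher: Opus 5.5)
Your proposal is correct and matches the paper's intended argument. The paper states this corollary without a separate proof, as an immediate consequence of \cref{cor:weighted-return-bound} together with the root-count formula \eqref{eq:root-count-ball} (via \eqref{eq:SI-one-dim-delta}), which is exactly how you derive (i)--(iii). Your alternative argument for (ii), using the orbit-ball parametrization $\Gamma_a:\Zp\to a+p^s\Zp$ and the injectivity of $\Gamma_a$, is also valid.
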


\subsection{Comparison with static \texorpdfstring{$p$}{p}-adic equations}\label{subsec:static-equation-comparison}

The one-dimensional certificate is related in spirit, but complementary, to classical solvability criteria for fixed $p$-adic equations.  Mukhamedov and Saburov studied the monomial equation $x^q=a$ over $\Qp$ \cite{MukhamedovSaburov2013}, while Mukhamedov, Omirov, and Saburov studied depressed cubic equations over $p$-adic fields, including solution counts in $\Zp^\times$, $\Zp$, and $\Qp$ \cite{MukhamedovOmirovSaburov2014}.  Those works concern static equations $F(x)=0$, where the unknown is $x$.  In contrast, the present framework fixes a local dynamical system and studies the time variable in
\[
        F(f^n(a))=0.
\]
After the orbit has been interpolated by $f^z(a)$, the Strassmann certificate gives an effective zero bound for the analytic function $F(f^z(a))$, and in dimension one this bound is exactly the Weierstrass root count of $F$ in the corresponding orbit ball.  Thus, for example, with $F(x)=x^3+Ax-B$, the method does not replace a cubic solvability criterion; it gives a local dynamical certificate for how often a chosen orbit can hit the roots of that cubic.  Similarly, the power-map application below studies equations such as $\alpha^{r^n}=\beta$, where the unknown is the time $n$ rather than the base variable of a fixed monomial equation.  The static solvability results are therefore not used in the proofs below, but they provide a useful comparison point for the orbit-intersection problem treated here.

\section{Arithmetic application: power maps on a \texorpdfstring{$p$}{p}-adic unit disc}\label{sec:power-map-application}

This section gives a concrete arithmetic-dynamical application of the one-dimensional certificate.  The point is not merely that the local theorem applies to a polynomial close to the identity, but that a familiar global map becomes such a polynomial after restricting to a residue disc and choosing a coordinate.  The resulting bound certifies intersections of an infinite exponential orbit with a finite target set.

Let $p\ge3$ and let $r\ge2$ be an integer satisfying
\[
        r\equiv1\pmod p.
\]
Consider the power map
\[
        \varphi_r:\mathbb G_m\longrightarrow\mathbb G_m,
        \qquad T\longmapsto T^r.
\]
The residue disc $1+p\Zp$ is invariant under $\varphi_r$.  In the coordinate
\[
        T=1+pX,
        \qquad X\in\Zp,
\]
the induced map is
\begin{equation}\label{eq:power-coordinate-map}
        g_r(X)=\frac{(1+pX)^r-1}{p}\in\Zp[X].
\end{equation}
Moreover
\begin{equation}\label{eq:gr-close-id}
        g_r(X)-X
        =(r-1)X+
        \sum_{k=2}^r \binom rk p^{k-1}X^k
        \in p\Zp[X].
\end{equation}
Thus $g_r(X)=X+p\Phi_r(X)$ for some $\Phi_r\in\Zp[X]$, and the one-dimensional theory of \cref{sec:one-dimensional} applies with $q=1$.

\begin{theorem}\label{thm:power-map-application}
Let $p\ge3$, let $r\ge2$ satisfy $r\equiv1\pmod p$, and let
\[
        \varphi_r(T)=T^r.
\]
Let $\alpha\in1+p\Zp$ be such that $\alpha^r\ne\alpha$, and put
\begin{equation}\label{eq:rho-power-map}
        \rho=\vp(\alpha^r-\alpha).
\end{equation}
Let $B$ be a finite subset of $1+p\Zp$.  Then
\begin{equation}\label{eq:power-map-bound}
        \#\{n\ge0:\alpha^{r^n}\in B\}
        \le
        \#\{\beta\in B:\vp(\beta-\alpha)\ge\rho\}.
\end{equation}
Furthermore, if a finite search has already found as many target hits as the right-hand side of \eqref{eq:power-map-bound}, then the search has found all target hits.
\end{theorem}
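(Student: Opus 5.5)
The plan is to transport the problem to the coordinate $X$ via $T=1+pX$ and then apply \cref{cor:weighted-return-bound} together with \cref{thm:one-dimensional-root-count}.

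Write $\alpha=1+pa$ with $a\in\Zp$, and set $f=g_r$. By \eqref{eq:gr-close-id}, $f(X)=X+p\Phi_r(X)$ with $\Phi_r\in\Zp[X]$. Since $p\ge3$ and $q=1$, we have $q(p-1)=p-1>1$, so the standing hypothesis of \cref{sec:one-dimensional} holds. By construction $1+pf^n(a)=\alpha^{r^n}$ for all $n\ge0$. In particular $f(a)-a=(\alpha^r-\alpha)/p$, so $f(a)\ne a$ and
\[
s=\vp(f(a)-a)=\rho-1.
\]

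For the target, put $b_\beta=(\beta-1)/p\in\Zp$ for each $\beta\in B$, and let
\[
F(X)=\prod_{\beta\in B}(X-b_\beta)\in\Zp[X].
\]
This is nonzero and has simple, distinct roots because the map $\beta\mapsto b_\beta$ is injective. Then $\alpha^{r^n}\in B$ if and only if $F(f^n(a))=0$, and every root has multiplicity one. Hence \cref{cor:weighted-return-bound} gives
\[
\#\{n:\alpha^{r^n}\in B\}\le \SI(F(a+p^sw)).
\]
By \eqref{eq:root-count-ball} this index equals the number of roots $b_\beta$ of $F$ (all lying in $\Zp\subset\OCp$) with $\vp(b_\beta-a)\ge s$. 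Finally, $\vp(b_\beta-a)=\vp(\beta-\alpha)-1$, so the condition $\vp(b_\beta-a)\ge s=\rho-1$ is exactly $\vp(\beta-\alpha)\ge\rho$. This yields \eqref{eq:power-map-bound}.

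The stopping statement then follows as in \cref{cor:practical-criteria}(iii). The found hits are distinct times $n$, each a zero of $F(\Gamma_a(z))$ in $\Zp$ with multiplicity one (\cref{thm:contact-formula}, using $f(a)\ne a$). If their number equals the index, Strassmann's bound (\cref{fact:strassmann}) leaves no room for further zeros in $\Zp$, and in particular no further hits.

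The only delicate point is the bookkeeping of the coordinate change. One must check that $g_r$ really preserves $\Zp$ and that the valuation shift by exactly $1$ occurs consistently in both $s$ and $\vp(b_\beta-a)$. One must also confirm that distinct times give distinct zeros; this holds because $\Gamma_a$ is injective by \cref{thm:orbit-ball}. I expect no real obstacle beyond this bookkeeping.
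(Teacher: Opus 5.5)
Your proposal is correct and follows the paper's proof essentially step for step: the coordinate $T=1+pX$, the polynomial $\prod_{\beta\in B}(X-b_\beta)$ with simple roots, the shift $s=\rho-1$, \cref{cor:weighted-return-bound} combined with the ball root count, and the stopping rule via \cref{cor:practical-criteria}. One small precision: distinct times are automatically distinct zeros in $\Zp$, so injectivity of $\Gamma_a$ is needed only to ensure no $f^n(a)$ is fixed, which is what lets the contact formula apply at each hit; $f(a)\ne a$ alone does not give this directly, and for the stopping rule the Strassmann bound on distinct zeros already suffices.
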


\begin{proof}
Write
\[
        a=\frac{\alpha-1}{p}\in\Zp.
\]
Then
\[
        g_r^n(a)=\frac{\varphi_r^n(\alpha)-1}{p}
        \qquad(n\ge0).
\]
For each $\beta\in B$, write
\[
        b_\beta=\frac{\beta-1}{p}\in\Zp,
\]
and define
\[
        F_B(X)=\prod_{\beta\in B}(X-b_\beta)\in\Zp[X].
\]
Then
\[
        F_B(g_r^n(a))=0
        \quad\Longleftrightarrow\quad
        \varphi_r^n(\alpha)\in B.
\]
Since $\alpha^r\ne\alpha$, we have $g_r(a)\ne a$.  The orbit-ball parameter for $g_r$ is
\[
        s=\vp(g_r(a)-a)
        =\vp\left(\frac{\alpha^r-\alpha}{p}\right)
        =\rho-1.
\]
By \cref{cor:weighted-return-bound}, the number of ordinary returns is bounded by the number of roots of $F_B$ in
\[
        a+p^s\OCp,
\]
counted with multiplicity.  Since $B$ is a set, these roots are simple.  A root $b_\beta$ lies in this ball precisely when
\[
        \vp(b_\beta-a)\ge s,
\]
which is equivalent to
\[
        \vp(\beta-\alpha)
        =1+\vp(b_\beta-a)
        \ge s+1=\rho.
\]
This proves \eqref{eq:power-map-bound}.  The final assertion follows because each found hit consumes one simple root in the Strassmann zero count, as in \cref{cor:practical-criteria}.
\end{proof}

\begin{corollary}\label{cor:power-map-avoidance}
Keep the notation of \cref{thm:power-map-application}.  Let $B\subset1+p\Zp$ be finite and let $e_\beta\ge1$ be integers.  Put
\[
        P_B(T)=\prod_{\beta\in B}(T-\beta)^{e_\beta}.
\]
Then the local Strassmann index of the interpolant $P_B(\varphi_r^z(\alpha))$ is
\[
        \sum_{\substack{\beta\in B\\ \vp(\beta-\alpha)\ge\rho}}e_\beta.
\]
Consequently:
\begin{enumerate}[label=\textup{(\roman*)}]
\item if $B\cap(\alpha+p^\rho\Zp)=\varnothing$, then $\alpha^{r^n}\notin B$ for every $n\ge0$;
\item only targets in the ball $\alpha+p^\rho\Zp$ can be hit;
\item once a finite search has found hits whose target multiplicities add up to the displayed sum, the search is certified complete.
\end{enumerate}
In particular, for a single target $\beta\in1+p\Zp$ satisfying $\vp(\beta-\alpha)<\rho$, the exponential equation
\[
        \alpha^{r^n}=\beta
\]
has no solution $n\in\Np$.
\end{corollary}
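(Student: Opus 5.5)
The plan is to reduce to the coordinate $X$ exactly as in the proof of \cref{thm:power-map-application}, and then read off the index from \cref{thm:one-dimensional-root-count}. Put $a=(\alpha-1)/p$ and $b_\beta=(\beta-1)/p$. Then
\[
        P_B(1+pX)=\prod_{\beta\in B}p^{e_\beta}(X-b_\beta)^{e_\beta}
        =p^{e}F(X),
\]
where $e=\sum e_\beta$ and $F(X)=\prod_\beta(X-b_\beta)^{e_\beta}\in\Zp[X]$. The interpolant of $\varphi_r^z(\alpha)$ is $1+p\,\Gamma_a(z)$, with $\Gamma_a$ the arc of $g_r$. Hence $P_B(\varphi_r^z(\alpha))=p^eF(\Gamma_a(z))$. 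Multiplying by the nonzero constant $p^e$ does not change the Strassmann index, since both $\alpha(H)$ and the maximizing index shift uniformly. So the local index equals $\SI(F(\Gamma_a(z)))$.

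Next apply \cref{thm:one-dimensional-root-count}(b) with $s=\rho-1$, as computed in the proof of \cref{thm:power-map-application}. This is legitimate because $g_r(a)\ne a$. It gives
\[
        \SI(F(\Gamma_a(z)))=\sum_{\vp(\gamma-a)\ge s}m_\gamma(F).
\]
The roots of $F$ are the distinct points $b_\beta$, since $\beta\mapsto b_\beta$ is injective, and each has multiplicity $e_\beta$. The condition $\vp(b_\beta-a)\ge s$ is equivalent to $\vp(\beta-\alpha)\ge\rho$, which yields the displayed sum.

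The consequences follow from the same count.

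\textbf{Item (i).} If the ball $\alpha+p^\rho\Zp$ contains no $\beta$, the index is $0$. By \cref{fact:strassmann} there are no zeros in $\Zp$, hence no hits.

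\textbf{Item (ii).} Apply (i) to the subset $B'=B\setminus(\alpha+p^\rho\Zp)$. Alternatively, note directly that any hit $\alpha^{r^n}=\beta$ lies in $\Gamma$'s image, so $\vp(b_\beta-a)\ge s$ by \cref{thm:orbit-ball}.

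\textbf{Item (iii).} Use \cref{cor:weighted-return-bound}. Each hit at time $n$ on $\beta$ contributes $m_{b_\beta}(F)=e_\beta$ to the zero divisor, by \cref{thm:contact-formula}, since no orbit point is fixed. Moreover, distinct times give distinct zeros of the interpolant in $\Zp$. Therefore, once the accumulated multiplicities reach the index, no further zeros can exist.

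The final single-target statement is (i) with $B=\{\beta\}$.

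The only delicate point is item (iii) when one $\beta$ might be hit at two times. Injectivity of $\Gamma_a$ (\cref{thm:orbit-ball}) rules this out, and I would cite it explicitly there.
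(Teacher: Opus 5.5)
Your proposal is correct and follows essentially the same route as the paper: write $P_B(1+pX)=p^{e}\prod_{\beta}(X-b_\beta)^{e_\beta}$, discard the nonzero scalar, and apply the one-dimensional root-count formula in the orbit ball $a+p^{\rho-1}\Zp$ (equivalently $\alpha+p^\rho\Zp$). You then derive (i)--(iii) from the weighted return bound and the contact formula, which the paper packages as \cref{cor:practical-criteria}.
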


\begin{proof}
In the coordinate $T=1+pX$, write $b_\beta=(\beta-1)/p$.  Then $P_B(1+pX)$ is a nonzero scalar multiple of
\[
        \prod_{\beta\in B}(X-b_\beta)^{e_\beta}.
\]
Multiplication by a nonzero scalar does not change the Strassmann index.  As in the proof of \cref{thm:power-map-application}, the orbit ball in the $X$-coordinate is $a+p^{\rho-1}\Zp$, equivalently the orbit ball in the $T$-coordinate is $\alpha+p^\rho\Zp$.  The one-dimensional root-count formula therefore gives the displayed index.  The exclusion and completion criteria follow from \cref{cor:practical-criteria}.
\end{proof}

\begin{example}\label{ex:power-map-3-adic}
Let
\[
        p=3,
        \qquad r=4,
        \qquad \varphi(T)=T^4,
        \qquad \alpha=4.
\]
Then $\alpha\in1+3\mathbb Z_3$ and
\[
        \rho=\vp(4^4-4)=\vp(252)=2.
\]
Consider the finite target set
\[
        B=\{4,256,7\}\subset1+3\mathbb Z_3.
\]
The targets in the certified orbit ball are those congruent to $4$ modulo $9$:
\[
        4\equiv4\pmod9,
        \qquad 256\equiv4\pmod9,
        \qquad 7\not\equiv4\pmod9.
\]
Therefore \cref{thm:power-map-application} gives
\[
        \#\{n\ge0:4^{4^n}\in\{4,256,7\}\}\le2.
\]
But
\[
        \varphi^0(4)=4,
        \qquad
        \varphi^1(4)=4^4=256.
\]
The two allowed hits have already occurred, so the finite search is certified complete.  Hence
\[
        4^{4^n}\notin\{4,256,7\}
        \qquad(n\ge2),
\]
and in particular
\[
        4^{4^n}\ne7
        \qquad(n\ge0).
\]
This conclusion concerns an infinite exponential orbit, but the proof uses only the root count inside the single $3$-adic ball $4+9\mathbb Z_3$.
\end{example}

\section{Arithmetic application: avoiding roots of unity}\label{sec:roots-unity-application}

This section gives a second arithmetic-dynamical application of the one-dimensional contact theorem.  Here the target divisors are
\[
        F_N(x)=x^N-1,
        \qquad N\ge1,
\]
and the initial point is a torsion unit.  This may be viewed as the dynamical counterpart of the special monomial equations $x^N=1$: the question is not whether such roots exist over $\Qp$, but whether a tangent-to-identity $p$-adic orbit through a torsion unit can meet the divisor $x^N-1=0$ again.  The conclusion is uniform in $N$: although the degree of $F_N$ grows, the certified local Strassmann index is always either $0$ or $1$.

\begin{theorem}\label{thm:roots-unity-avoidance}
Assume $d=1$ and
\[
        f(x)=x+p^q\Phi(x),
        \qquad \Phi\in\Zp\langle x\rangle,
        \qquad q(p-1)>1.
\]
Let $\zeta\in\Zp$ be a root of unity and assume that $f(\zeta)\ne\zeta$.  Then, for every integer $N\ge1$,
\begin{equation}\label{eq:roots-unity-return-set}
        \{n\in\Np:f^n(\zeta)^N=1\}
        =
        \begin{cases}
        \{0\}, & \zeta^N=1,\\[2pt]
        \varnothing, & \zeta^N\ne1.
        \end{cases}
\end{equation}
Moreover, if $\Gamma_\zeta(z)=f^z(\zeta)$, then
\begin{equation}\label{eq:roots-unity-SI}
        \SI\bigl(\Gamma_\zeta(z)^N-1\bigr)
        =
        \begin{cases}
        1, & \zeta^N=1,\\[2pt]
        0, & \zeta^N\ne1.
        \end{cases}
\end{equation}
Thus the local zero bound is sharp and independent of $N$.
\end{theorem}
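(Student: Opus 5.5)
We apply the one-dimensional root-count formula, \cref{thm:one-dimensional-root-count}(b), with $F=F_N=x^N-1$ and $a=\zeta$. Put $\delta=f(\zeta)-\zeta\neq0$ and $s=\vp(\delta)$. Then $\SI(\Gamma_\zeta(z)^N-1)$ equals the number of roots of $x^N-1$ in the ball $\zeta+p^s\OCp$, counted with multiplicity. Since $f(x)-x=p^q\Phi(x)$ with $\Phi$ integral, we have $s\ge q$. The hypothesis $q(p-1)>1$ then gives
\[
        s\ge q>\frac1{p-1}.
\]

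The next step is to count those roots. Every root $\eta$ of $x^N-1$ is a root of unity, and so is $\zeta$. If such an $\eta$ satisfies $\vp(\eta-\zeta)\ge s>1/(p-1)$, then \cref{fact:roots-unity-separation} forces $\eta=\zeta$. Hence the ball contains at most one root of $F_N$, namely $\zeta$, and it contains $\zeta$ exactly when $\zeta^N=1$.

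The multiplicity of $\zeta$ also needs checking. We have $F_N'(\zeta)=N\zeta^{N-1}\neq0$, so $\zeta$ is a simple root; this holds even when $p\mid N$, because we work in characteristic zero. Since $\zeta$ is a unit, $\vp(F_N'(\zeta))=\vp(N)$ may be positive, but multiplicity depends only on nonvanishing. Therefore the root count is $1$ if $\zeta^N=1$ and $0$ otherwise, which proves \eqref{eq:roots-unity-SI}.

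For the return set \eqref{eq:roots-unity-return-set}, recall $\Gamma_\zeta(n)=f^n(\zeta)$, so returns are zeros of $\Gamma_\zeta(z)^N-1$ at times $n\in\Np$. If $\zeta^N\neq1$, the index is $0$ and by \cref{fact:strassmann} there are no zeros, so the return set is empty. If $\zeta^N=1$, then $n=0$ is a zero, and index $1$ allows no others, as in \cref{cor:practical-criteria}; so the return set is $\{0\}$. Sharpness in the case $\zeta^N=1$ holds because the index $1$ is attained by the zero at $n=0$.

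The main point needing care is the inequality $s>1/(p-1)$, which is exactly what makes the separation fact applicable; it comes only from $s\ge q$ together with $q(p-1)>1$. One should also note that the roots $\eta$ of $x^N-1$ lie in $\OCp$, not necessarily in $\Zp$. This causes no difficulty, because \cref{thm:one-dimensional-root-count} already counts roots over $\OCp$ and \cref{fact:roots-unity-separation} is stated over $\Cp$.
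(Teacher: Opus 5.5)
Your proposal is correct and follows the paper's proof essentially step for step. You get $s\ge q>1/(p-1)$, use the orbit-ball root count of \cref{thm:one-dimensional-root-count}, isolate $\zeta$ with \cref{fact:roots-unity-separation}, note that the root is simple in characteristic zero, and then apply Strassmann to the return set; the only cosmetic differences are that you check simplicity via $F_N'(\zeta)=N\zeta^{N-1}\neq0$ and cite Strassmann directly instead of \cref{cor:weighted-return-bound}.
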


\begin{proof}
Set
\[
        s=\vp(f(\zeta)-\zeta).
\]
Since $f(x)\equiv x\pmod{p^q}$ and $f(\zeta)\ne\zeta$, we have $s\ge q$.  The hypothesis $q(p-1)>1$ gives
\[
        s>\frac1{p-1}.
\]
By \cref{thm:one-dimensional-root-count}, the Strassmann index of $\Gamma_\zeta(z)^N-1$ is the number of roots of $x^N-1$ in the closed ball
\[
        \zeta+p^s\OCp,
\]
counted with multiplicity.

Let $\alpha$ be a root of $x^N-1$ in this ball.  If $\alpha\ne\zeta$, then $\alpha/\zeta$ is a nontrivial root of unity.  By \cref{fact:roots-unity-separation},
\[
        \vp(\alpha-\zeta)
        =\vp\bigl(\zeta(\alpha/\zeta-1)\bigr)
        \le \frac1{p-1}<s,
\]
which contradicts $\alpha\in\zeta+p^s\OCp$.  Hence the ball contains no root of $x^N-1$ except possibly $\zeta$ itself.

If $\zeta^N\ne1$, it contains no root, so the index is $0$ and there are no returns.  If $\zeta^N=1$, it contains exactly the root $\zeta$.  This root has multiplicity one because $x^N-1$ is separable over the characteristic-zero field $\Cp$.  Therefore the index is $1$.  Since $n=0$ is already a return in this case, \cref{cor:weighted-return-bound} shows that there can be no further ordinary return time.
\end{proof}

\begin{corollary}\label{cor:polynomial-family-roots-unity}
Let $q(p-1)>1$ and let
\[
        f_u(x)=x+p^q u x^r,
        \qquad u\in\Zp^\times,
        \qquad r\ge1.
\]
Let $\zeta\in\Zp^\times$ be a root of unity.  Then $f_u(\zeta)\ne\zeta$, and for every $N\ge1$,
\[
        \{n\in\Np:f_u^n(\zeta)^N=1\}
        =
        \begin{cases}
        \{0\}, & \zeta^N=1,\\[2pt]
        \varnothing, & \zeta^N\ne1.
        \end{cases}
\]
In particular, for $p\ge3$ and $f(x)=x+px^2$, one has
\[
        (f^n(1))^N=1
        \quad\Longleftrightarrow\quad n=0
\]
for every $N\ge1$.
\end{corollary}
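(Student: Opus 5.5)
The plan is to reduce the corollary to \cref{thm:roots-unity-avoidance}.  Since $f_u$ is a polynomial of the form $x+p^q\Phi(x)$ with $\Phi(x)=ux^r\in\Zp[x]\subset\Zp\langle x\rangle$, and $q(p-1)>1$ is assumed, the standing hypotheses of that theorem hold automatically.  The only additional hypothesis to check is the non-fixedness condition $f_u(\zeta)\ne\zeta$.

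For that, I would compute directly:
\[
        f_u(\zeta)-\zeta=p^q u\zeta^r.
\]
Here $p^q\ne0$, $u\in\Zp^\times$, and $\zeta$ is a root of unity, hence a unit, so $\zeta^r\in\Zp^\times$.  Therefore $f_u(\zeta)-\zeta\ne0$; in fact its valuation is exactly $q$.

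\Cref{thm:roots-unity-avoidance} then gives the displayed description of the return set for every $N\ge1$, namely $\{0\}$ if $\zeta^N=1$ and $\varnothing$ otherwise.

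For the special case, take $q=1$, $u=1$, $r=2$ and $\zeta=1$.  The condition $q(p-1)>1$ becomes $p-1>1$, which is exactly $p\ge3$.  Since $1^N=1$ for every $N$, the return set is $\{0\}$, which is the claimed equivalence $(f^n(1))^N=1\iff n=0$.

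There is no real obstacle here.  The only point needing care is that $\zeta$ lies in $\Zp$ and is a unit, so that $\zeta^r$ cannot vanish; the corollary's hypothesis $\zeta\in\Zp^\times$ makes this explicit.
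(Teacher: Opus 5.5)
Your proposal is correct and is the paper's argument: verify $f_u(\zeta)-\zeta=p^q u\zeta^r\ne0$ using that $u$ and $\zeta$ are units, apply \cref{thm:roots-unity-avoidance}, and specialize to $q=1$, $u=1$, $r=2$, $\zeta=1$. Your remark that $q(p-1)>1$ with $q=1$ is exactly the condition $p\ge3$ is a useful detail that the paper leaves implicit.
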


\begin{proof}
For a root of unity $\zeta\in\Zp^\times$,
\[
        f_u(\zeta)-\zeta=p^q u\zeta^r\ne0.
\]
Thus \cref{thm:roots-unity-avoidance} applies.  The final assertion is the special case $q=1$, $u=1$, $r=2$, and $\zeta=1$.
\end{proof}

\begin{remark}\label{rem:roots-unity-effective}
For the divisors $F_N(x)=x^N-1$, the degree grows with $N$.  A naive degree bound would give at best $N$ possible local zeros.  The orbit-ball certificate gives the exact Strassmann index, which is either $0$ or $1$, uniformly in $N$.  Thus the local method proves a sharp return statement for an infinite arithmetic family of target divisors using only the $p$-adic separation of roots of unity and the one-dimensional contact theorem.
\end{remark}

\section{Worked examples}\label{sec:examples}

\begin{example}
Let
\[
        p=3,
        \qquad f(x)=x+3x^2,
        \qquad a=1,
        \qquad F(x)=x-1.
\]
Here $F(a)=0$, $\Phi(x)=x^2$, and
\[
        F'(1)\Phi(1)=1\in\mathbb Z_3^\times.
\]
By \cref{prop:first-order-escape}, $\SI(H_F)=1$, so $f^n(1)\ne1$ for all $n\ge1$.  In the one-dimensional interpretation, $s=\vp(f(1)-1)=1$ and
\[
        F(1+3w)=3w,
\]
whose Strassmann index is $1$.
\end{example}

\begin{example}
Let
\[
        p=3,
        \qquad f(x)=x+3x^2,
        \qquad a=1,
        \qquad F(x)=(x-1)(x-4).
\]
Since $s=\vp(f(1)-1)=1$,
\[
        F(1+3w)=(3w)(3w-3)=9w(w-1).
\]
Thus $\SI(F(f^z(1)))=2$.  The two roots in the orbit ball are $1$ and $4$, and indeed $f^0(1)=1$ and $f^1(1)=4$.  Their total multiplicity is $2$, so there are no further returns to $F=0$.
\end{example}

\begin{example}
Let
\[
        p=3,
        \qquad f(x)=x+9,
        \qquad a=0.
\]
Then $f^z(0)=9z$ and the orbit ball is $9\mathbb Z_3$.  If $F(x)=x-3$, then
\[
        F(9w)=9w-3=3(3w-1),
\]
whose Strassmann index is $0$; hence there is no return.  If $F(x)=x-18$, then
\[
        F(9w)=9(w-2),
\]
so the unique return occurs at $z=2$.
\end{example}

\begin{example}
Let
\[
        p=3,
        \qquad f(x)=x+3,
        \qquad a=0,
        \qquad F(x)=x(x-3).
\]
Then $f^n(0)=3n$ and
\[
        H_F(z)=F(3z)=9z(z-1)=9z^2-9z,
\]
so $\SI(H_F)=2$.  The orbit values are
\[
        y_0=0,
        \qquad y_1=0,
        \qquad y_2=18,
        \qquad y_3=54.
\]
For $M=1$, the truncated coefficients are zero and no certificate is obtained.  For $M=2$,
\[
        B_2=18,
        \qquad C_1^{(2)}=-9,
        \qquad C_2^{(2)}=9,
\]
so $\alpha_2=2$.  But
\[
        \lambda_{3,1}(2)=2,
\]
and the strict inequality required by \cref{thm:certificate} fails.  At $M=3$, one has $B_3=0$ and
\[
        \lambda_{3,1}(3)=3.
\]
Thus $\alpha_3=2<3$, and the certificate succeeds, returning $\SI(H_F)=2$.
\end{example}

\begin{example}\label{ex:zooming-certificate}
Continue with
\[
        p=3,
        \qquad f(x)=x+3,
        \qquad a=0,
        \qquad F(x)=x(x-3).
\]
The unsplit interpolant is
\[
        H_F(z)=F(3z)=9z(z-1).
\]
At truncation $M=2$ the coefficient minimum is $\alpha_2=2$, while
\[
        \lambda_{3,1}(2)=2,
\]
so the strict inequality in \cref{thm:certificate} does not yet certify the
index.  Now split the time variable modulo $3$.  Here $h=1$, the local map is
$f^3(x)=x+9$, and the certificate tails use $q+h=2$, for which
\[
        \lambda_{3,2}(1)=4.
\]
The three zoomed interpolants are
\[
\begin{aligned}
        H_{F;0,1}(u)&=F(9u)=27u(3u-1),\\
        H_{F;1,1}(u)&=F(3+9u)=27u(1+3u),\\
        H_{F;2,1}(u)&=F(6+9u)=18+81u+81u^2.
\end{aligned}
\]
For $b=0$ and $b=1$, the $M=1$ truncation has coefficient minimum $3<4$ and
returns $\SI(H_{F;b,1})=1$.  For $b=2$, the same truncation gives
\[
        C_0^{(1)}=18,
        \qquad C_1^{(1)}=162,
\]
so the coefficient minimum is $2<4$ and the certificate returns
$\SI(H_{F;2,1})=0$.  Thus the first residue-class refinement certifies the two
actual hits $n=0$ and $n=1$, and certifies avoidance on the class
$n\equiv2\pmod3$, without waiting for the unsplit $M=3$ certificate.
\end{example}

\begin{example}\label{ex:arc-gcd-gain}
Let
\[
        p=3,
        \qquad f(x)=x+3,
        \qquad a=0,
        \qquad \Gamma_a(z)=3z.
\]
Let $V$ be the target in the affine line cut out by the two equations
\[
        F_1(x)=x(x-3),
        \qquad
        F_2(x)=x(x-6).
\]
The $\mathbb Z_3$-points satisfying both equations consist only of $x=0$.
Along the interpolated orbit,
\[
        H_1(z)=F_1(3z)=9z(z-1),
        \qquad
        H_2(z)=F_2(3z)=9z(z-2).
\]
Each single equation has Strassmann index $2$.  However, the ideal
$(H_1,H_2)\subset\mathbb Q_3\langle z\rangle$ is generated, up to a unit, by $z$:
indeed $H_1$ and $H_2$ have the only common zero $z=0$ in the closed unit disc.
Thus the arc-gcd certificate gives
\[
        G_{\Gamma_a,V}(z)=z,
        \qquad \SI(G_{\Gamma_a,V})=1.
\]
The gcd certificate is therefore strictly sharper than either hypersurface
certificate: $F_1$ alone permits the two times $n=0,1$, $F_2$ alone permits the
two times $n=0,2$, while the target cut out by both equations is hit only at
$n=0$.
\end{example}

\section{Global-to-local role and finite extensions}\label{sec:global}

The results above are local.  In the $p$-adic DML method, one starts with a global algebraic dynamical system, chooses a prime of good reduction, and studies an integral orbit.  Since the residue field is finite, the reduced orbit is eventually periodic.  After passing to a residue class of the time variable, one obtains a subsequence
\[
        f^{rn+b}(a)
\]
that remains in a single residue polydisc.  In favorable settings, notably for \'{e}tale maps after replacing the map by a further iterate, the local map on that polydisc is congruent to the identity.  In coordinates, the local problem has the form
\[
        x\mapsto x+p^q\Phi(x),
        \qquad q(p-1)>1,
\]
and the present certificates apply to the analytic function interpolating
\[
        F(f^{rn+b}(a)).
\]
This is the same local analytic skeleton used in the Bell--Ghioca--Tucker approach \cite{BGT2010}; Poonen's theorem gives a clean sufficient condition for the interpolation step \cite{Poonen2014}.

\subsection{Finite extensions}

Let $K/\Qp$ be a finite extension, let $\OK$ be its valuation ring, and normalize the valuation by $\vp(p)=1$.  Put
\[
        \A_K=\OK\langle x_1,\ldots,x_d\rangle.
\]
If
\[
        f(x)=x+p^q\Phi(x),
        \qquad \Phi\in\A_K^d,
        \qquad q(p-1)>1,
\]
then the exact certificate, finite-precision certificate, sharper-tail criterion, one-shot bound, and first-order escape criterion remain valid over $K$.  The proofs are identical: $\Delta$ maps $\A_K$ into $p^q\A_K$, the denominators are controlled by $\vp(m!)$, and Strassmann's theorem holds over complete discretely valued extensions of $\Qp$.

\begin{remark}
The coefficient certificate remains valid over $K$, but the orbit-ball interpretation in \cref{sec:one-dimensional} uses the one-dimensional $\Qp$-analytic inverse theorem in the coordinate under consideration.  Over a proper finite extension $K$, a $\Zp$-analytic arc in $\OK$ need not fill an entire $\OK$-ball.  Thus the simple formula as a count of roots in the whole $\OK$-ball requires additional hypotheses or a separate $K$-analytic parametrization.
\end{remark}

\section{Limitations and possible extensions}\label{sec:limitations}

The method requires analytic interpolation of the iterates.  The congruence
\[
        f\equiv\id\pmod{p^q},
        \qquad q(p-1)>1,
\]
is a convenient sufficient condition.  If a global system cannot be reduced to this local form after passing to residue classes and iterates, the certificates do not apply directly.

The main obstruction is ramified or non-\'{e}tale behavior.  Contracting or singular directions may fail to admit a single analytic interpolation in the time variable in the simple Mahler form used here.  Such phenomena are part of the broader difficulty of DML beyond the \'{e}tale case; see the discussion in Xie \cite{Xie2023}.  Extending the effective Strassmann certificates to controlled non-\'{e}tale models would require new local normal forms or a separate treatment of contracting directions.

There is also a geometric limitation in higher dimension.  In dimension one, a non-fixed interpolated orbit parametrizes a ball, so the index becomes an ordinary root count.  In higher dimension, the interpolated orbit is a one-dimensional analytic arc in a polydisc, and the corresponding invariant is an intersection multiplicity with that arc.  The coefficient certificate still computes the Strassmann index, but there is no equally simple ambient ball-counting interpretation.

The arc-ideal perspective suggests a concrete computational extension: develop
certified finite-precision Weierstrass-gcd routines for the functions
$F_i(\Gamma_a(z))$ arising from several defining equations.  The present paper
uses the arc-gcd only after a generator has been obtained or certified; it does
not supply such a gcd routine.  A future routine of this kind would turn the
conceptual generator $G_{\Gamma_a,V}$ of \cref{sec:arc-ideals} into a practical
multiequation stopping rule.  A second direction is to combine
residue-class zooming with local normal forms for mildly non-\'{e}tale maps, so
that contracting directions are controlled by valuation growth while the
\'{e}tale directions are treated by Strassmann certificates.  These extensions
are not needed for the results proved here, but they are natural next steps for
making the local certificate useful in broader DML computations.

\section{Conclusion}

The standard $p$-adic interpolation method reduces a local piece of the Dynamical Mordell--Lang problem to the zero set of an analytic function $H_F:\Zp\to\Qp$.  Under the congruence hypothesis $f\equiv\id\pmod{p^q}$ with $q(p-1)>1$, the Mahler coefficients of $H_F$ have strong divisibility, and the conversion from Mahler polynomials to ordinary powers has explicitly controlled denominators.  This yields a finite certificate for the exact Strassmann index whenever $H_F$ is nonzero, as well as finite-precision, refined-tail, one-shot, and first-order variants.

The revised framework adds two further layers.  First, residue-class zooming turns the subclass $b+p^h\Zp$ of times into a new local system governed by $f^{p^h}$, which is $p^{q+h}$-close to the identity and therefore has stronger certificate tails.  Second, the arc-ideal viewpoint replaces separate hypersurface bounds by the gcd of all defining equations along the interpolated orbit, giving the natural intersection-multiplicity bound for multiequation targets.

In dimension one, the same local structure has a direct geometric interpretation: non-fixed interpolated orbits parametrize their $p$-adic orbit balls, contact with $F=0$ is root multiplicity, and the Strassmann index is the Weierstrass root count of $F$ in that ball, hence an upper bound for ordinary return times.  The power-map and root-of-unity applications show how this local description yields explicit certified bounds for intersections of infinite arithmetic orbits with finite target sets and torsion divisors.  The resulting tools are local and effective rather than global, but they make the local zero-bound step in the $p$-adic DML method explicit and certifiable.

\section*{Data and code availability} No data were used in the present work.

\section*{Declaration of competing interest}
The author declares that he has no known competing financial interests or personal relationships that could influence the work reported in this study.

\section*{Funding}
This research received no specific grant from any funding agency in the public, commercial, or not-for-profit sectors.

\end{document}